\documentclass[11pt, leqno]{amsart}%%%%%%%%%%
\usepackage {amssymb}
\usepackage {amsmath}
\usepackage {bbm}
\usepackage{amsthm}
\usepackage{mathtools}
\usepackage{graphicx}
\usepackage {amscd}
\usepackage {epic}
\usepackage[mathscr]{euscript}
\usepackage{comment}

\usepackage{geometry}
\usepackage{bm}
\usepackage[dvipsnames]{xcolor}
\usepackage[colorlinks,citecolor=OliveGreen,linkcolor=Mahogany,urlcolor=Plum,pagebackref]{hyperref}
\usepackage[alphabetic]{amsrefs}

\usepackage{todonotes}

\theoremstyle{plain}
\newtheorem{theorem}{Theorem}%[section]
\newtheorem{lemma}[theorem]{Lemma}

\newtheorem{corollary}[theorem]{Corollary}
\newtheorem{conjecture}[theorem]{Conjecture}

\theoremstyle{definition}
\newtheorem{example}[theorem]{Example}
\newtheorem{definition}[theorem]{Definition}

\theoremstyle{remark}
\newtheorem{remark}[theorem]{Remark}

\newcommand{\cO}{\mathcal{O}}

\newcommand{\bN}{\mathbb{N}}
\newcommand{\bP}{\mathbb{P}}
\newcommand{\bQ}{\mathbb{Q}}

\newcommand{\bR}{\mathbb{R}}

\newcommand{\bZ}{\mathbb{Z}}

\newcommand{\fX}{\mathfrak{X}}

\newcommand{\fa}{\mathfrak{a}}

\newcommand{\fm}{\mathfrak{m}}

\newcommand{\Frac}{\mathrm{Frac}}

\newcommand{\ord}{\mathrm{ord}}

\newcommand{\Val}{\mathrm{Val}}
\newcommand{\DivVal}{\mathrm{DivVal}}

\newcommand{\Spec}{\mathrm{Spec}}

\newcommand{\R}{\mathbb{R}}
\newcommand{\la}{\lambda}

\newcommand{\Div}{\mathrm{Div}}
\newcommand{\CDiv}{\mathrm{CDiv}}

\newcommand{\Fil}{\mathrm{Fil}}

\begin{document}
	
    \title{A note on b-divisors and filtrations on a local ring}% and local volumes of singularities}
	
    %\date{\today}

    \author{Lu Qi}%\footnote{This work was partially supported by the NSF FRG grant DMS-2139613, the NSF grant DMS-2201349 of Chenyang Xu and a Shanghai Sailing program 24YF2709800.}
    \address{School of Mathematical Sciences, East China Normal University, Shanghai 200241, China}

    \email{lqi@math.ecnu.edu.cn}

    \begin{abstract}
        In this note, we prove a correspondence between filtrations and b-divisors over a general class of Noetherian local domains. 
        As an application in the global setting, we prove a recent conjecture of Ro\'e-Urbinati.
    \end{abstract}

    \maketitle

\section{Introduction}

In this note, we consider two types of objects on normal, excellent, separated, and integral Noetherian schemes, unless otherwise specified. 
In particular, in the affine local case, denote by $(R,\fm)$ a normal, excellent Noetherian local domain.  % of pure characteristic; for example, the local ring of a normal variety over a field $k$. 
%\footnote{The condition seems necessary, since the proof relies essentially on Izumi's inequality.}
%Denote by $K\coloneqq\Frac(R)$ its fraction field and $\kappa\coloneqq R/\fm$ its residue field.

\medskip

The first kind of objects is Shokurov's \emph{b-divisors}, introduced in \cite{Sho-b-div}, where \emph{b} stands for \emph{birational}. 
A \emph{Weil b-divisor} $W$ over a scheme $X$ is a family of Weil divisors $\{W_\pi\in\Div(X_\pi)\}$ that is compatible with respect to push-forwards, where each $\pi:X_\pi\to X$ is a proper birational model of $X$. 
A Weil b-divisor $C$ over $X$ is said to be \emph{Cartier} if it is determined on some model $X_\pi$; that is, for any model $\pi':X_{\pi'}\to X$ that factors through $\pi$, $C_{\pi'}$ is the pull-back of $C_\pi$ (and, recall that $C$ is defined as the push-forward from a higher model for any other $\pi'$). 
Here, all divisors are allowed to have real coefficients.

If $X$ is a normal variety over a field $k$ of characteristic $0$, then b-divisors are also closely related to its \emph{Zariski-Riemann space} $\fX=\varprojlim_\pi X_\pi$, which dates back to Zariski in his proof of the resolution of singularities in dimensions $2$ and $3$. 
In this case, the space of Weil (resp. Cartier) b-divisors over $X$ is the same as that of Weil (resp. Cartier) divisors on $\fX$. Therefore, we will denote the space by $\Div(\fX)$ (resp. $\CDiv(\fX)$) for an arbitrary $X$\footnote{Although we will not touch the precise definition of Riemann-Zariski spaces in the generality of this note, the notation of \cite{BdFF-vol-iso} seems to be illuminating and is therefore adopted here.}.  

We remark that Cartier b-divisors appear naturally in birational geometry, for example, as the moduli part in the canonical bundle formula \cite{Kaw-subadj2}, or the nef part of a \emph{generalized polarized pair} introduced in \cite{BZ-gen-pair}. 
More recently, the K-stability theory is also extended to the setting of b-divisors \cite{DR-birational-K}. 

\medskip

The other kind of objects, \emph{graded sequences of ideals}, introduced in \cite{ELS-graded}, is defined as a decreasing sequence of ideal sheaves $\{\fa_m\subset \cO_X\}_{m\in\bZ_{\ge 0}}$ such that $\fa_m\cdot\fa_n\subset \fa_{m+n}$. 
Graded sequences have been exploited in commutative algebra and the study of singularities, including \cites{ELS-uniform-approx,Mus-regmult,LM-Okounkov-body,dFH-normal-sing,Cut13,Cut-adv,Dat-uniform-p}.
Moreover, they provide a bridge between the algebraic theory and the analytic theory of singularities. For example, using this notion, an algebraic approach toward the (strong) openness conjecture of Demailly-Koll\'ar \cite{DK-openness} was proposed in \cite{JM-val-ideal-seq}; the algebraic proof of the openness conjecture was completed by Xu \cite{Xu-quasi-monomial}. See also \cites{BFJ-regular-PSH,BFJ-singular-metric} for more applications to (complex and non-archimedean) analytic geometry.

The notion is not only a key technical tool in the local K-stability theory but also a central object itself; see, for example, \cites{Blu-minimizer-exist,LX-stability-kc,Liu-vol-sing-KE,XZ-sdsing}. 
Recently, the idea of viewing the space of all graded sequences (or their continuous version, \emph{filtrations}, where the indices $\lambda$ are real numbers) as a geometric object was implicitly employed in \cite{XZ-minimizer-unique} and then formalized in \cite{BLQ-convexity}. 
In particular, there are several topologies, including a natural geodesic metric space structure analogous to the Darvas metric on the space of K\"ahler potentials \cite{Dar-metric} (see also \cite{BJ-global-metric} for the non-archimedean version); see \cite{Qi-local} for more details and a few more results on the structure of the space of filtrations. 

\medskip

It is known that b-divisors and filtrations are naturally related. Indeed, an ideal $\fa\subset \cO_X$ defines an $X$-nef Cartier b-divisor (determined on the normalized blow-up of $\fa$)
\[
    Z_X(\fa)\coloneqq -\sum_{P/X} \ord_P(\fa)\cdot P,
\]
where the sum is taken over all prime divisors $P$ over $X$. 
It was observed in \cite{BdFF-vol-iso} that the same construction works for filtrations; that is, one has a well-defined b-divisor
\begin{equation}\label{eqn:filtr to b-div}
    Z_X(\fa_\bullet)\coloneqq -\sum_{P/X} \ord_P(\fa_\bullet)\cdot P,
\end{equation}
which is not necessarily Cartier.

Conversely, a b-divisor $W$ over $X$ defines an ideal $\fa(W)\subset\cO_X$ (that is not coherent in general) by
\begin{equation}\label{eqn:b-div to filtr}
    \fa(W)(U)\coloneqq \{f\in \cO_X(U)\mid Z(f)\le W\}
\end{equation}
for any open subset $U\subset X$. Here, we write $Z\ge 0$ for a Weil b-divisor $Z$ if all of its coefficients are non-negative. 
%Now, the sequence $\{\fa(m W)\}_{m\in\bZ_{>0}}$ automatically satisfies the condition that $\fa(m W)\cdot \fa(m' W)\subset \fa((m+m')W)$ as $Z(fg)=Z(f)+Z(g)$. 
If $W\le 0$, then $\fa(\lambda' W)\subset \fa(\lambda W)$ for any $\lambda'>\lambda$, and only then is the family of ideals decreasing. 
Thus, we will focus on anti-effective b-divisors in the sequel.

\medskip

Now we restrict ourselves to the (affine) local setting, where $X=\Spec R$ for some normal excellent Noetherian local domain $(R,\fm)$, and $x\in X$ corresponds to the closed point $\fm$. 
A b-divisor $W$ is said to be \emph{over $x\in X$} if all prime divisors appearing in all the $W_\pi$'s are centered on $x\in X$. 
Denote by $\Div(\fX,x)\subset \Div(\fX)$ the space of all such b-divisors.
Similarly, we have the space of all \emph{$\fm$-filtrations} $\fa_\bullet$ of $R$, where each $\fa_\lambda$ is an $\fm$-primary ideal. 
In particular, there is a canonical anti-effective b-divisor $Z(\fm)\in\Div(\fX,x)$ associated to the local ring $(R,\fm)$, and a canonical $\fm$-filtration $\fm^\bullet\coloneqq\{\fm^{\lceil \lambda \rceil}\}_{\lambda\in\bR_{> 0}}$, and we know that $Z(\fm^\bullet)=Z(\fm)$.  

The purpose of this note is to establish a correspondence between b-divisors and filtrations. 
To this end, we will impose some extra conditions that arise naturally.
\begin{itemize}
    \item By definition, for any $\fm$-filtrations $\fa_\bullet$, there exists $C\in\bR_{>0}$ such that $\fm^{\lceil C\lambda \rceil}\subset \fa_\lambda$. Therefore, a b-divisor $W$ coming from an $\fm$-filtration via \eqref{eqn:filtr to b-div} should automatically satisfy the inequality $C\cdot Z(\fm)\le W$. 
    Denote the set of anti-effective b-divisors bounded from below by
    \[
        \Div^+(\fX,x)\coloneqq\{W\in\Div(\fX,x)\mid C\cdot Z(\fm)\le W\le 0~ \text{for some}~ 0<C\}.
    \]

    \item An $\fm$-filtration $\fa_\bullet$ is \emph{linearly bounded} (by $\fm^\bullet$) if there exists a constant $\epsilon\in\bR_{>0}$ such that $\fa_\bullet\subset \fm^{\lceil \epsilon \lambda\rceil}$. Correspondingly, denote the set of \emph{bounded} b-divisors by
    \[
        \Div^b(\fX,x)\coloneqq\{W\in\Div(\fX,x)\mid C\cdot Z(\fm)\le W\le \epsilon\cdot Z(\fm)~ \text{for some}~ 0<\epsilon<C\}.
    \]
    
    \item We can rewrite \eqref{eqn:b-div to filtr} as
    \[
        \fa(W)(X)=\{f\in R\mid \ord_P(f)\ge -\ord_P(W)~ \text{for any divisor}~ P~\text{over}~ x\in X\}\subset R,
    \]
    which is similar to the definition of a \emph{saturated} filtration (see Definition \ref{defn:saturation}). 
    Write $\Fil^s_{R,\fm}$ for the set of linearly bounded saturated filtrations. 
\end{itemize}

The first two points will be summarized again in Lemma \ref{lem:bounded below gives filtration}. 

\medskip

Now we are ready to state the main result, which is a correspondence between b-divisors and filtrations in the local setting. 

\begin{theorem}\label{thm:structure of fil and nef}
	Let $(R,\fm)$ be a normal, excellent Noetherian local domain. Write $x\in X$ for the closed point $\fm\in \Spec R$. Then the following statements hold.
	
	\begin{enumerate}
        \item The formula \eqref{eqn:filtr to b-div} gives an injective map
        \[
            Z_X(\bullet):\Fil^s_{R,\fm}\to\Div^b(\fX,x),
        \]
        which is continuous in the coefficientwise topology. %and the closure of the image is $\Nef_+(\fX,x)$, 
		
        \item The global sections $\fa_\lambda(W)\coloneqq \fa(\lambda W)(X)$ defined as in \eqref{eqn:b-div to filtr}
        for $\lambda\in\bR_{>0}$ give a surjective map
        \[
            \fa_\bullet(\bullet):\Div^b(\fX,x)\to\Fil^s_{R,\fm}.
        \]
		
        \item We have 
        \[
            \fa_\bullet(Z_X(\fa_\bullet))=\fa_\bullet
        \]
        for any $\fa_\bullet\in\Fil^s_{R,\fm}$, and 
        \[
            Z_X(\fa_\bullet(W))\le W
        \]
        for any $W\in \Div^b(\fX,x)$.
	\end{enumerate}
    
\end{theorem}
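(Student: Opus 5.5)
Most of the work will sit in part (3); parts (1) and (2) then follow from it together with some elementary bounds.

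\emph{Well-definedness.} Given $\fa_\bullet\in\Fil^s_{R,\fm}$, choose constants with $\fm^{\lceil C\lambda\rceil}\subset\fa_\lambda\subset\fm^{\lceil\epsilon\lambda\rceil}$. Set
\[
\ord_P(\fa_\bullet)=\lim_{\lambda\to\infty}\ord_P(\fa_\lambda)/\lambda=\inf_\lambda \ord_P(\fa_\lambda)/\lambda .
\]
This limit exists by Fekete's lemma, since the sequence $\ord_P(\fa_\lambda)$ is superadditive. The two inclusions give
\[
\epsilon\,\ord_P(\fm)\le \ord_P(\fa_\bullet)\le C\,\ord_P(\fm),
\]
hence $C\,Z(\fm)\le Z_X(\fa_\bullet)\le\epsilon\,Z(\fm)$. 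Compatibility under push-forward is automatic, because the coefficients are indexed by divisorial valuations. So $Z_X(\fa_\bullet)\in\Div^b(\fX,x)$.

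Conversely, let $W\in\Div^b(\fX,x)$. Each $\fa_\lambda(W)$ is an ideal, and the family is decreasing in $\lambda$ because $W\le 0$. It is multiplicative, $\fa_\lambda\fa_\mu\subset\fa_{\lambda+\mu}$, because $Z(fg)=Z(f)+Z(g)$. The bound $W\ge C\,Z(\fm)$ gives $\fm^{\lceil C\lambda\rceil}\subset\fa_\lambda(W)$, so each $\fa_\lambda(W)$ is $\fm$-primary. The bound $W\le\epsilon Z(\fm)$ gives $\fa_\lambda(W)\subset\{f:\ord_P(f)\ge\epsilon\lambda\,\ord_P(\fm)\ \forall P\}=\overline{\fm^{\lceil\epsilon\lambda\rceil}}$. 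By Rees/Izumi-type comparison (integral closure of powers of $\fm$ in an excellent normal local domain), this is contained in $\fm^{\lceil\epsilon'\lambda\rceil}$ for a slightly smaller $\epsilon'>0$, which gives linear boundedness. This is where excellence is used. Finally, $\fa_\bullet(W)$ is saturated: the $\fa_\lambda(W)$ are cut out by divisorial valuation conditions, which matches Definition \ref{defn:saturation}, presumably formulated via $\ord_P(f)\ge\ord_P(\fa_\bullet)\lambda$ for all $P$ over $x$.

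\emph{Part (3).} First, the inequality $Z_X(\fa_\bullet(W))\le W$. For every $f\in\fa_\lambda(W)$ we have $\ord_P(f)\ge-\lambda\,\ord_P(W)$, so $\ord_P(\fa_\lambda(W))/\lambda\ge-\ord_P(W)$. Taking the limit gives the claim.

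Next, the identity $\fa_\bullet(Z_X(\fa_\bullet))=\fa_\bullet$. Unwinding the definitions, the left side at level $\lambda$ is
\[
\{f:\ord_P(f)\ge\lambda\,\ord_P(\fa_\bullet)\ \text{for all } P \text{ over } x\},
\]
which is exactly the saturation of $\fa_\bullet$. So this equality is precisely the definition of saturatedness. The one point to check is that the saturation in the paper uses divisorial valuations centered at $x$, or that quasi-monomial valuations can be replaced by divisorial ones by approximation, since $\ord_v(\fa_\bullet)$ is continuous on quasi-monomial simplices. I expect this matching of the definition of saturation to be the main subtle point.

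\emph{Parts (1) and (2) from (3).} Injectivity in (1) follows at once from the identity in (3): $\fa_\bullet$ is recovered from $Z_X(\fa_\bullet)$. Surjectivity in (2) also follows, since $W=Z_X(\fa_\bullet)$ is a preimage of $\fa_\bullet$.

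Continuity in (1) means that for each fixed $P$ the map $\fa_\bullet\mapsto\ord_P(\fa_\bullet)$ is continuous. If the topology on $\Fil^s_{R,\fm}$ is the metric/coefficientwise one of \cite{Qi-local}, in which convergence $d(\fa^i_\bullet,\fa_\bullet)\to0$ controls $|\ord_v(\fa^i_\bullet)-\ord_v(\fa_\bullet)|$ uniformly for valuations $v$ normalized by $v(\fm)=1$, then continuity follows by rescaling. Otherwise it holds essentially by definition of the topology.
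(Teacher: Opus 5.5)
Your proposal follows the paper's proof essentially step for step. The identity in (3) is read off as the definition of saturation; the paper states that definition with $\DivVal_{R,\fm}$, so no quasi-monomial approximation is needed. Injectivity and surjectivity come from that identity, continuity is tautological for the topology induced by $Z_X$, and $Z_X(\fa_\bullet(W))\le W$ is the same limit argument. Your appeal to Rees's theorem for linear boundedness of $\fa_\bullet(W)$ is more explicit than the paper's ``same argument in reverse.'' Two slips there: the set $\{f:\ord_P(f)\ge\epsilon\lambda\ord_P(\fm)\}$ is only contained in $\overline{\fm^{\lfloor\epsilon\lambda\rfloor}}$, not equal to $\overline{\fm^{\lceil\epsilon\lambda\rceil}}$, and $\lambda\mapsto\ord_P(\fa_\lambda)$ is subadditive, not superadditive.

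Two points in showing $\fa_\bullet(W)\in\Fil^s_{R,\fm}$ need repair.
\begin{enumerate}
\item You skip left-continuity. The paper obtains it from $\fa_\lambda(W)=\bigcap_{\epsilon>0}\fa_{\lambda-\epsilon}(W)$ together with the Artinianity of $R/\fa_\lambda(W)$.
\item Saturation of $\fa_\bullet\coloneqq\fa_\bullet(W)$ does not literally match Definition~\ref{defn:saturation}, because $\widetilde{\fa}_\lambda$ is cut out by $\lambda\ord_P(\fa_\bullet)$ rather than by $-\lambda\ord_P(W)$. It does follow from two facts: the general inclusion $\fa_\lambda\subset\widetilde{\fa}_\lambda$, and your own inequality $-\ord_P(W)\le\ord_P(\fa_\bullet)$ from (3), which gives $\widetilde{\fa}_\lambda\subset\fa_\lambda$. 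At this step the paper simply cites \cite[Proposition 2.20]{Qi-local}.
\end{enumerate}
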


The proof of Theorem \ref{thm:structure of fil and nef} is not hard once we have the correct set-up. The major motivation of this note is to prove the following conjecture of Ro\'e-Urbinati, \cite[Conjecture 3.19]{RU-b-div}, regarding valuations on projective varieties, which is known when $\dim X=2$ (Corollary 6.5 of \emph{op. cit.}). 
    
\begin{conjecture}\label{conj:RU}
    Let $X$ be a smooth projective variety over an algebraically closed field $k$ of characteristic $0$, and let $v\in \Val_X$ be a valuation on $X$. 
    Then $v$ is b-divisorial if and only if $Z_X(\fa_\bullet(v))\ne 0$. 
\end{conjecture}

Here, roughly speaking, a valuation $v\in \Val_X$ is called \emph{b-divisorial} following \cite{RU-b-div} if 
\[
    \fa_{\bullet}(v)=\fa_\bullet(Z_X(\fa_\bullet(v))),
\]
which follows from Theorem \ref{thm:structure of fil and nef} in the local setting where $X=\Spec R$. See Section \ref{ssec:filtr and b-div} for the precise definition and a brief discussion.

As an application of the main theorem, we prove that the statement in Conjecture \ref{conj:RU} holds in a more general setting.

\begin{corollary}\label{cor:b-div filtration}
    Let $X$ be a normal projective variety over a field $k$, and let  $v\in \Val_X$ be a valuation on $X$. Then $v$ is b-divisorial if and only if $Z_X(\fa_\bullet(v))\ne 0$.
\end{corollary}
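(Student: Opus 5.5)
We need to prove Corollary \ref{cor:b-div filtration}: for a normal projective variety $X$ and $v\in\Val_X$, $v$ is b-divisorial if and only if $Z_X(\fa_\bullet(v))\ne 0$. The plan is to reduce to the local statement of Theorem \ref{thm:structure of fil and nef} at the center of $v$, applied to the valuation ideals $\fa_\lambda(v)=\{f : v(f)\ge\lambda\}$.

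\emph{The easy direction.} Suppose $Z_X(\fa_\bullet(v))=0$. Then $\fa_\lambda(Z_X(\fa_\bullet(v)))=\fa_\lambda(0)=\cO_X$ for every $\lambda$. But $\fa_\lambda(v)$ is a proper ideal for $\lambda>0$, at least at the center of $v$, as long as $v$ is nontrivial. Hence the b-divisorial identity fails. The trivial valuation is handled by convention: it is excluded, or both sides of the equivalence agree for it.

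\emph{The main direction.} Suppose $Z\coloneqq Z_X(\fa_\bullet(v))\neq 0$. Let $\xi=c_X(v)$ be the center of $v$ on $X$, and set $R=\cO_{X,\xi}$, a normal excellent local domain. Nonvanishing of $Z$ means some prime divisor $P$ over $X$ has $\ord_P(\fa_\bullet(v))>0$. Then $\fa_\lambda(v)\subset\fa_{c\lambda}(\ord_P)$ for some $c>0$, and localizing at $\xi$ gives that $\fa_\bullet(v)_\xi$ is linearly bounded by the $\ord_P$-filtration. The key claim I would try to establish is that $\fa_\bullet(v)_\xi$ is linearly bounded with respect to $\fm_\xi$ on $R$. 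I would argue this from $c_X(\ord_P)\supseteq\overline{\xi}$ together with Izumi-type comparison. Where the center of $P$ is strictly larger than $\overline{\xi}$, I would instead work at the generic point of $c_X(P)$, using that $\fa_\lambda(v)$ is cosupported on $\overline{\xi}$.

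Valuation ideals are saturated: $f\in\fa_\lambda(v)$ exactly when $f$ satisfies the defining inequalities of $v$. So $\fa_\bullet(v)_\xi$ is, possibly after passing to the relevant localization, an element of $\Fil^s_{R,\fm}$. Theorem \ref{thm:structure of fil and nef}(3) then gives $\fa_\bullet(Z_{\Spec R}(\fa_\bullet(v)))=\fa_\bullet(v)$ locally at $\xi$. It remains to globalize. The b-divisor $Z_X(\fa_\bullet(v))$ restricts to $Z_{\Spec R}$ over $\Spec R$, and both sheaves $\fa_\lambda(v)$ and $\fa_\lambda(Z)$ are cosupported on $\overline{\xi}$ and agree off it. Hence equality of stalks at $\xi$, together with the tautological inclusion $\fa_\lambda(v)\subset\fa_\lambda(Z)$, should give equality of sheaves. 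I would check this at every point of $\overline{\xi}$ by the same localization argument.

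\emph{Expected obstacle.} The main obstacle is making the local theorem applicable. This requires showing that nonvanishing of $Z$ forces linear boundedness, with $\fm$-primary ideals, at an appropriate local ring, and handling the case where $\fa_\lambda(v)$ is not $\fm_\xi$-primary. In that case I would first try replacing $\xi$ by the generic point of the locus where the coefficients of $Z$ are positive, and invoke Izumi's inequality there.
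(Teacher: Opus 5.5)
\textbf{There is a genuine gap: the two steps that carry the content are asserted, not proved.}

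\textbf{Saturation.} This is the decisive step, and it is not tautological. You say valuation ideals are saturated because $f\in\fa_\lambda(v)$ exactly when $v(f)\ge\lambda$. But saturation (Definition \ref{defn:saturation}) asks that membership be detected by the \emph{divisorial} inequalities $w(f)\ge\lambda\, w(\fa_\bullet(v))$ for all $w\in\DivVal_{R,\fm}$. Only the inclusion $\fa_\lambda(v)\subseteq\widetilde{\fa}_\lambda$ is automatic, and the reverse inclusion fails in general. Take a curve valuation $v(f)=\ord_t f(t,\phi(t))$ on $k[x,y]_{(x,y)}$ with $\phi$ transcendental. Then $y-\phi_m(x)\in\fa_{m+1}(v)$ has bounded $w$-value for each divisorial $w$. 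Hence $w(\fa_\bullet(v))=0$ for every $w$, and $\widetilde{\fa}_\lambda=\fm$ for all $\lambda$. In the paper this step is Lemma \ref{lem:linearly bounded implies saturated}. It requires linear boundedness and rests on Rees' theorem for filtrations, so multiplicities enter implicitly. Theorem \ref{thm:structure of fil and nef}(3) for saturated filtrations is just a restatement of the definition of saturation. So invoking it without that lemma adds nothing.

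\textbf{Linear boundedness.} You announce that $\fa_\bullet(v)_\xi$ is linearly bounded but do not prove it. The paper takes the nonvanishing hypothesis to supply a divisorial $w$ centered at $\zeta=\xi$ itself with $w(\fa_\bullet(v))>0$. Then $\fa_\lambda(v)\subset\fa_{c\lambda}(w)\subset\fm^{\lceil c'\lambda\rceil}$ by Izumi (Lemma \ref{lem:Izumi}), and Lemma \ref{lem:linearly bounded implies saturated} applies.

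Your discussion of other centers has the containment backwards. Since $\fa_\lambda(v)$ is trivial off $\overline{\xi}$, any $P$ with $\ord_P(\fa_\bullet(v))>0$ has $c_X(\ord_P)\in\overline{\xi}$. So a center strictly larger than $\overline{\xi}$ never arises for such $P$; the only other case is a proper specialization of $\xi$. In that case localizing at $\xi$ makes $\fa_\bullet(\ord_P)$ trivial. Your fallback, working at the generic point of $c_X(P)$ or of the positivity locus of $Z$, does not help. At such a point $\fa_\bullet(v)$ is primary to the prime ideal of $\xi$, not to the maximal ideal, and $v$ is not centered there. So neither Izumi nor Theorem \ref{thm:structure of fil and nef} yields anything about $v$ on $\cO_{X,\xi}$.

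\textbf{Globalization.} This part is fine, and it needs no argument at other points of $\overline{\xi}$. Once $v(f)=\inf_{c_X(w)=\xi}w(f)/w(\fa_\bullet(v))$, enlarging to $w$ with $c_X(w)\in\overline{\xi}$ only lowers the infimum, so $v\ge\ord_{Z_X(v)}$. The reverse inequality is your tautological inclusion, which is Ro\'e--Urbinati's Proposition 3.15. This is exactly how the paper finishes.
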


    %The subscript $Z_X$ indicates that the b-divisor is computed over $X$; in other words, the sum in \eqref{eqn:b-divisor} is taken over all prime divisors over $X$. 

\medskip 

\begin{remark}
\begin{enumerate}
    \item One key ingredient in the proof of Theorem \ref{thm:structure of fil and nef} is that a linearly bounded valuation is saturated (Lemma \ref{lem:linearly bounded implies saturated}), which follows from Rees' theorem for filtrations \cite[Theorem 1.4]{BLQ-convexity}.
    In particular, although the volume (or multiplicity) of a filtration is not involved in any statements of the results, it is implicitly used in the proof. 

    %We also remark that the proof of Corollary \ref{cor:b-div filtration}
    
    \item The assumption of normality and excellence is twofold. 
    First, a normal excellent Noetherian local domain is analytically irreducible by \cite[\href{https://stacks.math.columbia.edu/tag/0C23}{Lemma 0C23}]{stacks-project}, and hence one can apply the results of \cite{BLQ-convexity}. 
    Second, an excellent ring is Nagata by \cite[\href{https://stacks.math.columbia.edu/tag/07QV}{Lemma 07QV}]{stacks-project}, and it is more convenient to describe the set of all \emph{proper} birational models of an excellent scheme; see Remark \ref{rmk:excellence}.
    In addition, it is natural to work with Weil divisors on normal schemes.
    
    The main theorem might hold in more general settings of a more algebraic nature. For example, the recent work of Cutkosky \cite{Cut-unramified} extended Rees' theorem and several similar results for filtrations to the setting of analytically unramified Noetherian domains, which is the same as the setting for Rees' original version \cite{Rees-Rees} for ideals.
    We will not pursue utmost generality in this note and will leave the possible generalizations to interested readers.

    \item Since pull-backs always preserve numerical equivalence, it is reasonable to say that a Cartier b-divisor $C\in\CDiv(\fX)$ is \emph{$X$-nef} if any determination $C_\pi$ is nef over $X$. 
    To define numerical classes of Weil b-divisors, it is better to consider only smooth varieties (or at least, $\bQ$-factorial varieties).
    In particular, 
    \[
        N_1(\fX)\coloneqq \varprojlim_\pi N_1(X_\pi)
    \]
    is well-defined if the subset of smooth models of $X$ is co-final with the set of all normal models of $X$. This is the case if we know the (suitable version of) resolution of singularities; for example, it is true if $X$ is a variety over a field of characteristic $0$, or if $\dim X=2$. 
    In such cases, we know that $Z_X(\fa_\bullet)$ is a \emph{$X$-nef} Weil b-divisor by the same argument as in \cite[Proposition 2.1]{BdFF-vol-iso}. 
    Positivity of a general Weil b-divisor seems to be a subtle topic, and is beyond the scope of this note.% Meanwhile, 

    \item In line with \cite[Section 2.4]{BdFF-vol-iso} and \cite[Example 4.7]{RU-b-div}, the author learned from communications with Wenbin Luo that the last statement in Theorem \ref{thm:structure of fil and nef} might be related to the b-divisorial version of Zariski-Nakayama decompositions in the local setting. Such decompositions in the global setting have been studied in \cites{BFJ-differentiability,KM-b-Zariski}. 
\end{enumerate}
\end{remark}

\subsection*{Acknowledgments}
The author would like to thank Chenyang Xu for bringing the conjecture to his attention, as well as for valuable discussions. He would like to thank Yujie Luo, Wenbin Luo, and Tong Zhang for their helpful comments.

This work is partially supported by the NSFC (No. 12501055) and the Shanghai Sailing Program (24YF2709800).

\section{Preliminaries}

In this section, we collect the definitions and basic results needed for the proof as concisely as possible.  
All schemes are assumed to be normal, excellent, separated, integral, and Noetherian unless otherwise specified. 
In particular, denote by $(R,\fm)$ an $n$-dimensional normal excellent Noetherian local domain. 
Denote by $K\coloneqq\Frac(R)$ its fraction field and $\kappa\coloneqq R/\fm$ its residue field.
%We also assume that $R$ contains a field $k$; that is, $R$ is of pure characteristic.

\subsection{Norms and valuations}

%\begin{definition}\label{defn:norm}
	A (real) \emph{norm}\footnote{We should emphasize that, in this note, a \emph{norm} is assumed to be \emph{sub-multiplicative}. This is slightly different from the typical term in Berkovich analytic geometry, where all norms are \emph{multiplicative}.} on $K$ is a function $\chi:K^\times\to \bR$ such that 
	\begin{enumerate}
		%\item $\chi(a)=0$ for any $a\in k^\times$, 
			
		\item $\chi(f+g)\ge \min\{\chi(f),\chi(g)\}$ for any $f,g\in R$, and
		
		\item $\chi(fg)\ge \chi(f)+\chi(g)$ for any $f,g\in R$. 
    \end{enumerate} 

    Set $\chi(0)\coloneqq +\infty$ by convention.
    A norm $\chi$ is called an \emph{$\fm$-norm on $R$} if it further satisfies
    \begin{enumerate}
        \item[(3)] $\chi(f)\ge 0$ for any $f\in R$ and $\chi(f)>0$ if and only if $f\in\fm$. 
    \end{enumerate}
	
    If $\chi$ is a $\fm$-norm on $R$, and we replace condition (2) above with
    \begin{enumerate}
        \item[(2')] $\chi(fg)=\chi(f)+\chi(g)$ for any $f,g\in R$,
    \end{enumerate}
    then $\chi$ is called a (real) \emph{valuation} on $R$ centered at $\fm$. 
    Denote the set of real valuations on $R$ centered at $\fm$ by $\Val_{R,\fm}$. 

%\end{definition}

A valuation $v$ on $K$ induces a \emph{valuation ring} $\cO_{v} : = \{ f \in K \, \vert \, v(f) \geq 0\}$, which is a local ring.
We write $\fm_v$ for the maximal ideal of $\cO_v$ and $\kappa_{v}:=R_v/\fm_v$.

If $R$ is a $k$-algebra, then in all definitions above, we further assume that $\chi$ is a $k$-norm; that is, $\chi(a)=0$ for any $a\in k^\times$. 
%Note that $v$ has center on $\fm$

\medskip

In general, given an integral scheme $X$, a valuation on its function field $K(X)$ is called a valuation \emph{on $X$} if there exists a scheme-theoretic point $\zeta\in X$ such that the local ring $(\cO_v,\fm_v)$ \emph{dominates} the local ring $(\cO_{X,\zeta},\fm_\zeta)$; that is, $\cO_{X,\zeta}\subset \cO_v$ and $\fm_\zeta=\fm_v\cap \cO_{X,\zeta}$.  
In this case, $\zeta\coloneqq c_X(v)$ is called a \emph{center} of $v$ on $X$. 

If $X$ is separated, then \emph{the} center of $v$ on $X$ is unique; if $X$ is proper, then any valuation on $K$ admits a unique center on $X$.
Denote by $\Val_X$ the set of valuations on $X$, and by $\Val_{X,\zeta}$ the set of valuations with center $\zeta\in X$. 

\subsubsection{Divisorial valuations}
Let $X$ be an integral separated Noetherian scheme. A valuation $v\in\Val_X$ is \emph{divisorial} if 
\[
    {\rm tr.deg}_{\kappa(\zeta)}(\kappa_v) =\dim \cO_{X,\zeta}-1,
\]
where $\zeta=c_X(v)\in X$. We write $\DivVal_X\subset \Val_X$ for the set of such valuations.
In the local setting, $v\in \Val_{R,\fm}$ is divisorial if 
${\rm tr.deg}_{\kappa}(\kappa_v) =n-1$, and we have $\DivVal_{R,\fm}\subset \Val_{R,\fm}$. 
By definition, $v\in\Val_X$ is divisorial if and only if it is divisorial on $\cO_{X,c_X(v)}$.
See \cite[Section 9.3]{HS-integral-closure} for an account of divisorial valuations from a more algebraic perspective.

Divisorial valuations appear geometrically.  
If $\mu:Y\to \Spec R$ is a proper birational morphism with $Y$ normal and  $E\subset Y$ a prime divisor, 
then there is an induced valuation  $\ord_{E} : K^\times \to \bZ$. 
If $\mu(E) =\fm$ and  $c\in \bR_{>0}$, then $c\cdot \ord_E \in \DivVal_{R,\fm}$.
As $R$ is excellent, all divisorial valuations are indeed of this form; see, for example, \cite[Lemma 6.5]{CS-arbitrary}.

%In particular, for any normal variety $X$ over a field $k$, the set $\DivVal_X$ can be identified with the set of prime divisors over $X$.

\subsection{Filtrations}
Recall that an \emph{$\fm$-filtration} on $R$ is a slight generalization of a graded sequence of $\fm$-primary ideals studied in \cite{JM-val-ideal-seq}, which is also a local analog of a filtration of the section ring of a polarized variety in \cite{BHJ-DH-measure}.

%\begin{definition}\label{defn:filtration}
An \emph{$\fm$-filtration} on $R$ is a collection $\fa_\bullet =\{\fa_\la \}_{\la \in \R_{>0}}$  of $\fm$-primary ideals of $R$ such that
\begin{enumerate}
\item (decreasing) $\fa_\la \subset \fa_{\mu}$ when  $\la >\mu$,
\item (left continuous) $\fa_{\la} = \fa_{\la-\epsilon}$ when $0<\epsilon\ll1$, and
\item (multiplicative) $\fa_{\la} \cdot \fa_{\mu} \subset \fa_{\la+ \mu}$ for any $\lambda,\mu\in\bR_{>0}$.
\end{enumerate}
By convention, we set $\fa_{0}\coloneqq R$.  

An $\fm$-filtration $\fa_\bullet$ is \emph{linearly bounded} if there exists $c\in\bR_{>0}$ such that $\fa_{\la} \subset \fm^{\lceil c  \la \rceil }$ for all $\la\in \bR_{>0}$. 
Denote the set of linearly bounded $\fm$-filtrations on $R$ by $\Fil_{R,\fm}$. 
%\end{definition}

For $v\in \Val_{R,\fm}$ and an ideal $\fa\subset R$, set
$v(\fa)\coloneqq \min\{v(f)\mid f\in\fa\}$.
For an $\fm$-filtration $\fa_\bullet$,  set
\[
v(\fa_\bullet)\coloneqq  \lim_{ \bN\in m \to \infty} \frac{ v(\fa_m)}{m} = \inf_{m\in \bN} \frac{v(\fa_m)}{m} , 
\]
where the existence of the limit and the second equality is \cite[Lemma 2.3]{JM-val-ideal-seq}.

\medskip

In general, let $X$ be a scheme. A family $\{\fa_\lambda\subset\cO_X\}_{\lambda\in\bR_{>0}}$ of (coherent) ideals is a (coherent) filtration on $X$ if it is decreasing, left continuous, and multiplicative. 
A valuation $v\in\Val_X$ can be evaluated on a coherent ideal $\fa\subset \cO_X$ by
\[
    v(\fa)\coloneqq\min\{v(f)\mid f\in\fa_{c_X(v)}\}=v(\fa_{c_X(v)}),
\]
and thus $v(\fa_\bullet)$ can be defined asymptotically as above. 

A special case that is particularly useful in this note is the \emph{valuative ideals} $\fa_\bullet(v)$ for a valuation $v\in\Val_X$ with $c_X(v)=\zeta$, defined as
\[
    \fa_\lambda(v)(U)\coloneqq\left\{\begin{aligned}
        &\{f\in \cO_X(U)\mid v(f)\ge \lambda\}, &\text{if}~ \zeta\in U\\
        &\cO_X(U), &\text{otherwise}.
    \end{aligned}\right.
\]
for $\lambda\in\bR_{>0}$. It is easy to see that $\{\fa_\lambda(v)_\zeta\}$ is an $\fm_\zeta$-filtration on $\cO_{X,\zeta}$. 
Moreover, we know that $\fa_\lambda(v)_\xi\subset\fm_{\xi}$ for any $\xi\in\overline{\zeta}$, and that $\fa_\lambda(v)_\xi=\cO_{X,\xi}$ otherwise. 
Therefore, $w(\fa_\bullet(v))=0$ for any $w\in\Val_X$ with $c_X(w)\in\overline\zeta$. 

\medskip

As in the global setting, e.g., \cites{BHJ-DH-measure,BJ-global-metric}, there is a correspondence between $\fm$-filtrations and $\fm$-norms. 
For the following version, see \cite[Definition-Lemma 2.8]{Qi-local}.
%See \cites{BHJ-DH-measure,BJ-global-metric} for a general dictionary.%, and see \cite{Qi-local} for some refinements in the local setting.

\begin{lemma}\label{lem:filtr and norm}
    There is a one-to-one correspondence between $\fm$-norms and $\fm$-filtrations satisfying $\cap_{\lambda>0}\fa_\lambda=\{0\}$, as follows.
    
    Given an $\fm$-filtration $\fa_\bullet$, the associated $\fm$-norm $\ord_{\fa_\bullet}$ is defined by
    \[
        \ord_{\fa_\bullet}(f)\coloneqq\sup\{\lambda\in\bR\mid f\in\fa_\lambda\}.
    \]
    Conversely, given an $\fm$-norm $\chi$, the associated $\fm$-filtration $\fa_\bullet=\fa_\bullet(\chi)$ is defined by 
    \[
        \fa_\lambda(\chi)\coloneqq\{f\in R\mid \chi(f)\ge \lambda\}
    \]
    for any $\lambda\in\bR_{>0}$. \qed
\end{lemma}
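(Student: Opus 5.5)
The plan is to check directly that the two constructions land in the right sets and are mutually inverse, with left continuity doing the main work in the round trip.

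\emph{From filtrations to norms.} Given an $\fm$-filtration $\fa_\bullet$ with $\cap_{\lambda>0}\fa_\lambda=\{0\}$, put $\chi=\ord_{\fa_\bullet}$. Since $\fa_0=R$, we have $\chi\ge 0$ on $R$. For $f\ne 0$, finiteness $\chi(f)<\infty$ is exactly the hypothesis $\cap_\lambda\fa_\lambda=\{0\}$, because $\fa_\bullet$ is decreasing. I will extend $\chi$ to $K^\times$ in the usual way; I will not dwell on this, since all axioms are stated on $R$.

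\emph{Norm axioms.} The ultrametric inequality holds because each $\fa_\lambda$ is an ideal: if $f,g\in\fa_\lambda$ then $f+g\in\fa_\lambda$, and taking the supremum over $\lambda$ gives $\chi(f+g)\ge\min\{\chi(f),\chi(g)\}$. Super-additivity comes from multiplicativity: for $\lambda<\chi(f)$ and $\mu<\chi(g)$ we have $fg\in\fa_\lambda\fa_\mu\subset\fa_{\lambda+\mu}$, hence $\chi(fg)\ge\chi(f)+\chi(g)$. For the direction in condition (3) about units, if $f\notin\fm$ then $f$ is a unit, while every $\fa_\lambda$ with $\lambda>0$ is a proper ideal, so $\chi(f)=0$.

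\emph{From norms to filtrations.} Given an $\fm$-norm $\chi$, each $\fa_\lambda(\chi)$ is an ideal by the two norm axioms. It is decreasing in $\lambda$ and multiplicative by super-additivity. Left continuity holds because the condition $\chi(f)\ge\lambda$ is closed: $f\in\fa_{\lambda-\epsilon}$ for every small $\epsilon$ forces $\chi(f)\ge\lambda$. The intersection $\cap_\lambda\fa_\lambda(\chi)$ is $\{0\}$ because $\chi$ is real-valued on $K^\times$.

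\emph{Mutual inverses.} Starting from $\chi$, we get $\ord_{\fa_\bullet(\chi)}(f)=\sup\{\lambda\mid\chi(f)\ge\lambda\}=\chi(f)$. Starting from $\fa_\bullet$, clearly $\fa_\lambda\subset\fa_\lambda(\ord_{\fa_\bullet})$. Conversely, if $\ord_{\fa_\bullet}(f)\ge\lambda$, then $f\in\fa_{\lambda'}$ for all $\lambda'<\lambda$, and left continuity gives $f\in\fa_\lambda$.

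\emph{Expected main obstacle.} The delicate point is matching condition (3) with $\fm$-primarity of the ideals. First, $\fa_\lambda(\chi)$ must be $\fm$-primary. For $\lambda>0$ it lies in $\fm$, since units have $\chi=0<\lambda$. Choose generators $x_1,\dots,x_r$ of $\fm$ and set $c=\min_i\chi(x_i)>0$, which is positive by (3). Then every monomial of degree $k$ in the $x_i$ has $\chi\ge kc$, so $\fm^{\lceil\lambda/c\rceil}\subset\fa_\lambda(\chi)$.

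In the other direction, one would like $\chi(f)>0$ for every $f\in\fm$. But $\fm$-primarity only gives $f^N\in\fa_\epsilon$ for some $N$, and super-additivity points the wrong way to conclude. Indeed, $\fa_\lambda=\fm^{2\lceil\lambda\rceil}$ is a legitimate $\fm$-filtration with $\ord_{\fa_\bullet}=0$ on $\fm\setminus\fm^2$. So I would follow the conventions of \cite[Definition-Lemma 2.8]{Qi-local} here. Either the positivity in (3) should be read as $\chi(f)>0$ only for $f$ in some power of $\fm$, or positivity is equivalent to $\fa_\epsilon\supset\fm$ for some $\epsilon$. I would adjust the bookkeeping of (3) accordingly; the rest of the correspondence is formal.
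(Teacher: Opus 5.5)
The paper gives no argument for this lemma beyond citing Definition-Lemma 2.8 of \cite{Qi-local}, so your direct verification is the natural proof. Your diagnosis is also right. The family $\fa_\lambda=\fm^{2\lceil\lambda\rceil}$ is an $\fm$-filtration with $\bigcap_\lambda\fa_\lambda=0$ by Krull. Yet $\ord_{\fa_\bullet}$ vanishes on $\fm\setminus\fm^2$. So with condition (3) as written, $\fa_\bullet\mapsto\ord_{\fa_\bullet}$ does not land in $\fm$-norms, and the lemma as stated is false without a normalization.

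Both of your fixes work.
\begin{itemize}
\item The first should be phrased as ``$\chi\ge 0$ on $R$ and $\chi>0$ on $\fm^k\setminus\{0\}$ for some $k$.'' Then $\chi(1)=0$ and $\chi(u)=0$ for units follow from super-additivity and $\chi\ge 0$. Your generator argument, applied to generators of $\fm^k$, gives $\fm^{kN}\subset\fa_{Nc}(\chi)$.
\item The second keeps (3) and requires $\fm\subset\fa_\epsilon$ for some $\epsilon>0$. This is the normalization the paper tacitly uses: the introduction asserts $\fm^{\lceil C\lambda\rceil}\subset\fa_\lambda$ for every $\fm$-filtration, which for $\lambda\le 1/C$ says exactly $\fm\subset\fa_\lambda$.
\end{itemize}
Either way, the later applications (Lemma \ref{lem:saturated norm} and Corollary \ref{cor:b-div filtration}) use only the two round-trip identities, and your argument proves those without invoking (3).

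Two smaller repairs are needed.
\begin{itemize}
\item \emph{Left continuity.} In this paper it means $\fa_{\lambda-\epsilon}=\fa_\lambda$ for $0<\epsilon\ll 1$. Your closedness argument gives only $\bigcap_{\epsilon>0}\fa_{\lambda-\epsilon}(\chi)=\fa_\lambda(\chi)$. Prove $\fm$-primarity of $\fa_\lambda(\chi)$ first. Then $R/\fa_\lambda(\chi)$ is Artinian, so the descending family $\fa_{\lambda-\epsilon}(\chi)/\fa_\lambda(\chi)$ stabilizes, and hence equals its intersection $0$ for small $\epsilon$. This is exactly the argument in the proof of Lemma \ref{lem:bounded below gives filtration}.
\item \emph{Extension to $K^\times$.} There is no ``usual'' extension of a non-multiplicative $\chi$ to $K^\times$, because $\chi(f)-\chi(g)$ depends on the representative of $f/g$. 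Any arbitrary extension would destroy injectivity of $\chi\mapsto\fa_\bullet(\chi)$. The correspondence should be read with $\fm$-norms as functions on $R\setminus\{0\}$, which is all the axioms involve.
\end{itemize}
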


An $\fm$-norm is called \emph{linearly bounded} if its associated $\fm$-filtration is so.

\subsubsection{Saturated filtrations}

The following notion introduced in \cite{BLQ-convexity} plays a key role in our proof. 

\begin{definition}\label{defn:saturation}
    The \emph{saturation} $\widetilde{\fa}_\bullet$ of an $\fm$-filtration $\fa_\bullet$ is defined by
    \[
        \widetilde \fa_\lambda\coloneqq \{f\in \fm \mid v(f)\ge \lambda\cdot v(\fa_\bullet) \text{ for all } v\in\DivVal_{R,\fm} \}
    \]
    for each $\lambda\in \R_{>0}$.
    We say that $\fa_\bullet$ is \emph{saturated} if $\fa_\bullet=\widetilde\fa_\bullet$. 
\end{definition}

Denote the set of linearly bounded saturated filtrations by 
\[
    \Fil^s_{R,\fm}\coloneqq \{\fa_\bullet\in\Fil_{R,\fm}\mid \fa_\bullet \text{ is saturated}\}.
\]

Saturation is analogous to the integral closure of an ideal. 
In particular, it provides the right notion to characterize when several inequalities of multiplicities are indeed equalities; for more details, see \cite{BLQ-convexity} or \cite{Cut-unramified} for a more general account.

\subsection{Shokurov's b-divisors}

In this subsection, we collect some basic results about Shokurov's b-divisors, mostly to fix the notation. Recall that all schemes involved are normal, excellent, integral, separated, and Noetherian. 
In particular, we can talk about the group of Weil divisors $\Div(X)$, and there is an injective cycle map $\CDiv(X)\hookrightarrow \Div(X)$ from the group of Cartier divisors.  
For more details in the geometric setting, see \cites{Isk-b-div,Cor-flip,BdFF-vol-iso}. 

\begin{comment}
Recall that the \emph{Riemann-Zariski space} $\fX$ of a scheme $X$ is defined to be the inverse limit
    \[
        \fX:=\varprojlim_{\pi} X_\pi,
    \]
where the limit is taken in the category of locally ringed topological spaces, and over all proper birational morphisms $\pi:X_\pi\to X$, where $\pi'\ge \pi$ if and only if $\pi'$ factors through $\pi$. Here each $X_\pi$ is called a \emph{model} of $X$. Following Shokurov, one can define b-divisors on $X$ using $\fX$ as follows.
\end{comment}
A \emph{(proper birational) model} of a scheme $X$ is a proper birational morphism $\pi:X_\pi\to X$. The set of all models of $X$ forms a directed set ordered by \emph{dominance}. In other words, define a partial order by $\pi'\ge \pi$ if and only if $\pi'$ factors through $\pi$, and for any models $\pi_1,\pi_2$ of $X$, there is a model $\pi'$ such that $\pi'\ge \pi_i$ for $i=1,2$. 

\begin{remark}\label{rmk:excellence}
    Such a higher model $X_{\pi'}$ can be taken as the normalization of the fiber product $X_{\pi_1}\times_X X_{\pi_2}$. The assumption that $X$ is excellent is used to guarantee that the normalization is finite; see \cite[\href{https://stacks.math.columbia.edu/tag/07QV}{Lemma 07QV}]{stacks-project}.
\end{remark}

Recall that an integral \emph{Weil b-divisor} over $X$ is an element of the abelian group
$$\varprojlim_\pi \Div(X_\pi),$$
where the limit is taken with respect to the push-forward maps $\Div(X_{\pi'})\to\Div(X_\pi)$ for $\pi'\ge \pi$. 
More concretely, a Weil b-divisor $W$ over $X$ is a family of Weil divisors $\{W_\pi\in\Div(X_\pi)\}$ for all (proper, normal, birational) models of $X$, such that $\mu_*W_{\pi'}=W_\pi$ if $\pi'$ factors through $\mu:X_{\pi'}\to X_\pi$. 

Similarly, an integral \emph{Cartier b-divisor} over $X$ is an element of the abelian group
$$\varinjlim_\pi \CDiv(X_\pi),$$
where the limit is taken with respect to the pull-back maps $\CDiv(X_{\pi})\to\CDiv(X_{\pi'})$ for $\pi'\ge \pi$. 
Equivalently, a Cartier b-divisor $C$ over $X$ is a Weil b-divisor $\{C_\pi\}$ \emph{determined} on a model $X_\pi$; that is, $C_{\pi'}=\mu^*C_\pi$ for any $\pi'$ factoring through $\mu:X_{\pi'}\to X_\pi$. 
	
Denote by 
\[
\Div(\fX):= \varprojlim_\pi\Div(X_\pi)\otimes\bR\quad (\text{resp.}~ \CDiv(\fX):=\varinjlim_\pi\CDiv(X_\pi)\otimes\bR)
\]
the real vector space of $\bR$-Weil b-divisors (resp. $\bR$-Cartier b-divisors). 
Note that there is a cycle map $\CDiv(\fX)\hookrightarrow\Div(\fX)$ induced by those on models.

%Slightly abusing the notation, we will call them $\bR$-b-divisors (resp. $\bR$-Cartier b-divisors). We equip $\Div_\bR(\fX)$ with the topology of coefficient-wise convergence.
	
Recall that in our setting, any divisorial valuation $v$ is of the form $t\cdot \ord_E$, where $E\subset Y\to X$ is a Weil divisor on a model of $X$. Hence for any $W\in\Div(\fX)$, we can define $v(W):=t\cdot\ord_E(W_Y)$. 
As the name suggests, a net $\{W_m\}\subset \Div(\fX)$ converges to $W$ in the \emph{coefficientwise topology} if and only if $\ord_P(W_m)\to \ord_P(W)$ for any prime divisor $P$ over $X$. 
	
\begin{lemma}(c.f. \cite[Lemma 1.1]{BdFF-vol-iso}\label{lem:b-divisors as functions})
	There is an injection from $\Div(\fX)$ into homogeneous functions $\DivVal_X\to \bR$, defined by $W\mapsto (\phi_W:v\mapsto v(W))$. The image consists of those $\phi$ such that, for any model $X_\pi$, the set of prime divisors $E\subset X_\pi$ such that $\phi(E)\ne 0$ is finite. 
\end{lemma}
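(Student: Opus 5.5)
We identify a b-divisor with the function recording its coefficients along all divisorial valuations. Everything reduces to two facts: every divisorial valuation is realized as $t\cdot\ord_E$ for a prime divisor $E$ on some model, and coefficients of a b-divisor along a fixed valuation do not depend on the model chosen.

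\emph{Step 1: realizing valuations on models.} For $v\in\DivVal_X$ with center $\zeta$, I use the fact recalled above (via \cite[Lemma 6.5]{CS-arbitrary} and excellence) that on $\cO_{X,\zeta}$ one has $v=t\cdot\ord_E$ for a prime divisor $E$ on a proper birational model of $\Spec\cO_{X,\zeta}$. I then need a model of $X$ itself.
\begin{itemize}
\item If the local model is the normalized blow-up of an ideal of $\cO_{X,\zeta}$, spread that ideal out to a coherent ideal on $X$.
\item Blow it up and normalize; normalization is finite by Remark \ref{rmk:excellence}.
\item This yields a model $X_\pi\to X$ on which the center of $v$ is a prime divisor $E$, with $v=t\cdot\ord_E$.
\end{itemize}
Prime divisors on $X$ itself, i.e.\ non-exceptional ones, are also divisorial in the sense above, since $\dim\cO_{X,\zeta}=1$ at their generic points.

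\emph{Step 2: well-definedness.} Suppose $v=t\cdot\ord_{E_1}=t'\cdot\ord_{E_2}$ with $E_i$ a prime divisor on a model $X_{\pi_i}$.
\begin{itemize}
\item Pass to a common model $X_{\pi'}$ dominating both, namely the normalization of the fibre product (Remark \ref{rmk:excellence}).
\item The center of $v$ on $X_{\pi'}$ maps onto the generic point of each $E_i$. Hence it is the generic point of a prime divisor $E'$ whose push-forward to $X_{\pi_i}$ is $E_i$, and $\ord_{E'}=\ord_{E_i}$, so $t=t'$.
\item Push-forward compatibility $\mu_*W_{\pi'}=W_{\pi_i}$ gives $\ord_{E_1}(W_{\pi_1})=\ord_{E'}(W_{\pi'})=\ord_{E_2}(W_{\pi_2})$.
\end{itemize}
So $\phi_W(v):=t\cdot\ord_E(W_\pi)$ is well defined. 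It is homogeneous ($\phi_W(sv)=s\phi_W(v)$) and linear in $W$.

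\emph{Step 3: injectivity and the finiteness condition.} If $\phi_W=0$, then every coefficient of every $W_\pi$ vanishes, since each prime divisor $E\subset X_\pi$ gives the divisorial valuation $\ord_E$. Hence $W=0$. The finiteness condition holds automatically, because each $W_\pi$ is a Weil divisor and so has finite support.

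\emph{Step 4: surjectivity.} Given a homogeneous $\phi$ satisfying the finiteness condition, set
\[
W_\pi\coloneqq\sum_{E\subset X_\pi}\phi(\ord_E)\,E,
\]
which is a finite sum by hypothesis. For compatibility, let $\mu:X_{\pi'}\to X_\pi$. The push-forward $\mu_*E'$ is zero when $E'$ is $\mu$-exceptional, and otherwise equals the image divisor $E$ with $\ord_{E'}=\ord_E$. Hence $\mu_*W_{\pi'}=W_\pi$, and by construction $\phi_W=\phi$.

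The main obstacle is Step 1, specifically for valuations whose center on $X$ is non-closed: one must check carefully that a local model over $\cO_{X,\zeta}$ globalizes to a proper birational model of $X$ on which the valuation's center is still a divisor. I would handle this by spreading out a blown-up ideal as described. Step 2 is routine once Step 1 is in place.
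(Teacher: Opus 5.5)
Your proof is correct and is essentially the standard argument behind the paper's citation of \cite[Lemma 1.1]{BdFF-vol-iso}. The paper gives no separate proof; it only records that every $v\in\DivVal_X$ has the form $t\cdot\ord_E$ on some model, so that $v(W)\coloneqq t\cdot\ord_E(W_Y)$ makes sense (your Steps 1--2), after which injectivity and the description of the image are exactly your Steps 3--4. The hedge in your Step 1 is harmless, since any proper birational model of $\Spec\cO_{X,\zeta}$ is dominated by a normalized blow-up of an ideal, on which the center of $v$ stays divisorial by your Step 2 argument; likewise, $\ord_E\in\DivVal_X$ for exceptional $E$ (used in Step 3) is the dimension formula for the excellent, hence universally catenary, ring $\cO_{X,\zeta}$, which the paper takes as known.
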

	
\begin{example}\label{eg:b-div of an ideal}
    Let $X$ be a scheme.
    \begin{enumerate}
        \item A Cartier divisor $D\in\CDiv(X)$ defines a Cartier b-divisor $\overline D$ determined on $X$, where for any model $\pi$, $(\overline D)_\pi\coloneqq \pi^*D$. 
        More generally, a Cartier divisor on a model defines a Cartier b-divisor in a similar manner. 

        \item Given a coherent ideal $\fa\subset \cO_X$, one can define $Z_X(\fa)$, the Cartier b-divisor determined on the normalized blowup $X_\pi$ of $X$ along $\fa$, to be
        \[
		      \fa\cdot\cO_{X_\pi}=\cO_{X_\pi}(Z_X(\fa)_\pi).
        \]
        Equivalently, $Z_X(\fa)=-\sum_P \ord_P(\fa)\cdot P$ is the b-divisor corresponding to the function 
        \[
            \phi_\fa:\DivVal_X\to \bR,~ v\mapsto -v(\fa).
        \]
        Similarly, rational function $f\in K(X)$ defines a Cartier b-divisor 
        \[
            Z_X(f)\coloneqq -\sum_P \ord_P(f)\cdot P=-\overline{\mathrm{div}(f)}.
        \] 
    \end{enumerate}
\end{example}

\subsubsection{b-divisors over a closed point}

Let $x\in X$ be a closed point. A b-divisor $W$ is \emph{over} $x$ if for any model $X_\pi$, every component $P$ of the $\bR$-Weil divisor $W_\pi$ is centered on $x\in X$; that is, $\pi(P)=\{x\}$. 
Denote by $\Div(\fX,x)\subset \Div(X)$ the subspace of all b-divisors over $x$. It is easy to see that $Z(\fa)\in\Div(\fX,x)$ for any $\fm$-primary ideal $\fa$.  

Denote the set of anti-effective b-divisors bounded from below by 
\[
    \Div^+(\fX,x)\coloneqq \{Z\in\Div(\fX,x)\mid C\cdot Z(\fm)\le Z\le 0 \text{ for some } C\in\bR_{>0}\},
\]
and denote the set of \emph{bounded} (anti-effective) b-divisors by
\[
    \Div^b(\fX,x)\coloneqq \{Z\in\Div(\fX,x)\mid C\cdot Z(\fm)\le Z\le \epsilon\cdot Z(\fm) \text{ for some } \epsilon,C\in\bR_{>0}\}.
\]

\subsection{Filtrations and b-divisors}\label{ssec:filtr and b-div}

By Fekete's lemma, the same construction as in Example \ref{eg:b-div of an ideal} works for filtrations. More precisely, we have the following construction. 

\begin{lemma}(c.f. \cite[Lemma 2.11]{BdFF-vol-iso} or \cite[Proposition 3.1]{RU-b-div})
    Given a coherent filtration $\fa_\bullet$ on a scheme $X$, one can define an $\bR$-Weil b-divisor 
    \begin{equation*}%\label{eqn:b-divisor}
        Z_X(\fa_\bullet)\coloneqq -\sum_P \ord_P(\fa_\bullet)\cdot P,
    \end{equation*}
    where the sum is taken over all prime divisors $P$ over $X$.

    If $X=\Spec R$ for some Noetherian local domain $(R,\fm)$ and $\fa_\bullet$ is an $\fm$-filtration on $R$, then $Z_R(\fa_\bullet)\coloneqq Z_X(\fa_\bullet)\in\Div(\fX,x)$. 
\end{lemma}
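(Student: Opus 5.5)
The plan is to build $Z_X(\fa_\bullet)$ through Lemma \ref{lem:b-divisors as functions}. That is, I define the function $\phi:\DivVal_X\to\bR$, $v\mapsto -v(\fa_\bullet)$, and verify three things: $\phi$ is well defined with finite values, $\phi$ is homogeneous, and on each model $X_\pi$ only finitely many prime divisors $E\subset X_\pi$ satisfy $\phi(\ord_E)\neq 0$. The lemma then produces a unique $\bR$-Weil b-divisor with $\ord_P(Z_X(\fa_\bullet))=-\ord_P(\fa_\bullet)$ for every prime divisor $P$ over $X$, which is exactly the displayed formula.

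\textbf{Step 1: well-definedness of $v(\fa_\bullet)$.} Fix $v\in\DivVal_X$ with center $\zeta=c_X(v)$. Multiplicativity $\fa_m\cdot\fa_n\subset\fa_{m+n}$ gives $v(\fa_{m+n})\le v(\fa_m)+v(\fa_n)$, so the sequence $m\mapsto v(\fa_m)$ is subadditive. Since each $\fa_m\subset\cO_X$, we have $v(\fa_m)\ge 0$. Fekete's lemma then shows that $\lim_m v(\fa_m)/m$ exists and equals $\inf_m v(\fa_m)/m\in[0,\infty)$, as in \cite[Lemma 2.3]{JM-val-ideal-seq}.

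The value is finite as soon as some $\fa_m$ is nonzero. This holds for any nontrivial coherent filtration, and automatically in the $\fm$-primary case; I would state it as a standing assumption. Homogeneity $(tv)(\fa_\bullet)=t\,v(\fa_\bullet)$ is immediate from the definition.

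\textbf{Step 2: finiteness on each model.} Fix a model $\pi:X_\pi\to X$ and an index $m$ with $\fa_m\neq 0$. For every prime divisor $E\subset X_\pi$,
\[
0\le \ord_E(\fa_\bullet)\le \frac{\ord_E(\fa_m)}{m}.
\]
Here $\ord_E(\fa_m)\neq 0$ only if $E$ is contained in the cosupport of the nonzero coherent ideal $\fa_m\cdot\cO_{X_\pi}$. That cosupport is a proper closed subset of the Noetherian scheme $X_\pi$, so it contains only finitely many prime divisors. Hence only finitely many $E\subset X_\pi$ have nonzero coefficient, and Lemma \ref{lem:b-divisors as functions} applies.

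\textbf{Step 3: the local case.} Let $X=\Spec R$ and let $\fa_\bullet$ be an $\fm$-filtration. Suppose $P\subset X_\pi$ is a prime divisor with $\pi(P)\neq\{x\}$. Then the center $\zeta$ of $\ord_P$ is a non-closed point of $X$. Because each $\fa_m$ is $\fm$-primary, we have $(\fa_m)_\zeta=\cO_{X,\zeta}$, so $\ord_P(\fa_m)=0$ for all $m$, and therefore $\ord_P(\fa_\bullet)=0$. Consequently every component of every $Z_R(\fa_\bullet)_\pi$ lies over $x$, which gives $Z_R(\fa_\bullet)\in\Div(\fX,x)$.

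None of these steps is a serious obstacle. The only point that needs care is Step 2: the bound by a single ideal $\fa_m$ is what converts the asymptotic invariant into the finiteness condition of Lemma \ref{lem:b-divisors as functions}, and this requires fixing one $m$ with $\fa_m\neq 0$.
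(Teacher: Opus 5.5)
Your proposal is correct and follows essentially the route the paper indicates. The paper gives no argument beyond invoking Fekete's lemma and the ideal case, citing the two references. Your bound $0\le \ord_E(\fa_\bullet)\le \ord_E(\fa_m)/m$ is exactly what makes ``the same construction as for ideals'' produce a genuine Weil b-divisor, and in the $\fm$-primary case it shows that b-divisor is supported over $x$. One detail is worth stating explicitly: a single nonzero $\fa_{m_0}$ forces every $\fa_m\neq 0$, since $\fa_m\supset\fa_{km_0}\supset\fa_{m_0}^k$ for $km_0\ge m$ in a domain, so Fekete's lemma is really applied to a finite-valued sequence.
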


As in \cite{Qi-local}, the \emph{coefficientwise topology} on a space of filtrations is induced by the coefficientwise topology on $\Div(\fX)$ via $Z_X$.

\medskip
    
Conversely, if $X=\Spec R$ for some Noetherian local domain $(R,\fm)$, then for a b-divisor $W\in \Div(\fX,x)$ and $\lambda\in\bR_{>0}$, one can define an ideal of $R$ by
\begin{equation}\label{eqn:b-div to filtr local}
    \fa_\lambda(W):=\{f\in R\mid Z(f)\le \lambda W\}=\{v(f)\ge -\lambda \cdot v(W) \text{ for any } v\in\DivVal_{R,\fm}\}.
\end{equation}
This coincides with the global section $\cO_X(\lambda W)$ of \cite{BdFF-vol-iso} if $W\le 0$. 
Note that the same definition yields a filtration $\fa_\bullet$ on a scheme $X$, but it is not coherent in general. 

%using the notation of \cite{RU-b-div}, we know that $\fa_\lambda(W)=\{f\in R\mid \ord_W(f)\ge \lambda\}$.

Alternatively, for coherent filtration $\fa_\bullet$ on $X$, one can define the \emph{vanishing order} of an effective Cartier divisor $D$ on $X$ along the b-divisor $Z_X(\fa_\bullet)$ by
\begin{equation}\label{eqn:vanishing order of b-div}
    \ord_{Z_X(\fa_\bullet)}(D)\coloneqq \sup\{t\in\bR\mid \overline D+tZ_X(\fa_\bullet)\ge 0\},
\end{equation}
where $\overline D$ is the Cartier b-divisor defined in Example \ref{eg:b-div of an ideal}. 
If $\fa_\bullet=\fa_\bullet(v)$ for some $v\in\Val_X$, then we write $\ord_{Z_X(v)}$ or even $\ord_{Z(v)}$ for simplicity. 
As in \cite[Section 3]{RU-b-div}, $v$ is called \emph{b-divisorial} if $v(D)=\ord_{Z(v)}(D)$ for any effective Cartier divisor $D$ on $X$. 

\begin{remark}\label{rmk:domain for definition of vanishing order}
    Let $X=\bP^2_{[x:y:z]}$ and let $D=\mathrm{div}(\frac{1}{x})=-H_x$, where $H_x$ is the $x$-axis. Let $v=\ord_E$, where $E$ is the exceptional divisor of the blow-up of the origin. 
    Then as a trivial observation, clearly $\overline D+tZ(v)$ is never effective; on the other hand, according to \cite[Proposition 3.17]{RU-b-div}, one should have $\ord_{Z(v)}(D)=v(D)=-1$.

    Therefore, it seems necessary to restrict the definition \eqref{eqn:vanishing order of b-div} to \emph{effective} Cartier divisors.
    Equivalently, one can define $\ord_{Z(v)}(f)$ as in \eqref{eqn:vanishing order of b-div} for $f\in \cO_{X,c_X(v)}$, where the b-divisors are now computed over a neighborhood of $c_X(v)$.  
\end{remark}

We will use the following formula for $\ord_{Z(\fa_\bullet)}$. 

\begin{lemma}\label{lem:filt rewriten}
    Let $\fa_\bullet$ be a coherent filtration on a normal excellent scheme $X$. If there exists a scheme-theoretic point $\zeta\in X$ such that $\fa_{\lambda,\xi}=\cO_{X,\xi}$ for any $\xi\notin\overline{\zeta}$. Then for any $f\in \cO_{X,\zeta}$, we have
    \[
        \ord_{Z_X(\fa_\bullet)}(f)=\inf_{w\in\DivVal_X,~c_X(w)\in\overline{\zeta}} \frac{w(f)}{w(\fa_\bullet)}.
    \]
    In particular, let $v\in\Val_X$ be a valuation. Then for any $f\in \cO_{X,c_X(v)}$, we have
    \[
        \ord_{Z(v)}(f)=\inf_{w\in\DivVal_X,~c_X(w)\in\overline{c_X(v)}} \frac{w(f)}{w(\fa_\bullet(v))}.
    \]
\end{lemma}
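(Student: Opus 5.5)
We need to prove Lemma \ref{lem:filt rewriten}: for $f\in\cO_{X,\zeta}$ (with $f$ regarded as an effective Cartier divisor near $\zeta$, per Remark \ref{rmk:domain for definition of vanishing order}),
\[
\ord_{Z_X(\fa_\bullet)}(f)=\inf_{w\in\DivVal_X,\ c_X(w)\in\overline\zeta}\frac{w(f)}{w(\fa_\bullet)}.
\]
The plan is to unwind definition \eqref{eqn:vanishing order of b-div} coefficientwise using Lemma \ref{lem:b-divisors as functions}, and then discard the divisors whose center lies outside $\overline\zeta$.

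First, shrink $X$ to an open neighborhood $U$ of $\zeta$ on which $f$ is regular. Then $\overline{D}=\overline{\mathrm{div}(f)}$ is effective. By Lemma \ref{lem:b-divisors as functions}, a b-divisor is $\ge 0$ if and only if $w(\cdot)\ge 0$ for every $w\in\DivVal_X$. Since $v(Z_X(\fa_\bullet))=-v(\fa_\bullet)$, the condition $\overline D+tZ_X(\fa_\bullet)\ge 0$ reads
\[
w(f)-t\,w(\fa_\bullet)\ge 0\quad\text{for all } w\in\DivVal_U.
\]

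Next, split the valuations by center. If $c_X(w)\notin\overline\zeta$, then $\fa_{\lambda,c_X(w)}=\cO_{X,c_X(w)}$ for all $\lambda$ by hypothesis, so $w(\fa_\lambda)=0$ and hence $w(\fa_\bullet)=0$. The constraint then becomes $w(f)\ge 0$, which holds automatically since $f$ is regular on $U$. If $c_X(w)\in\overline\zeta$, then $w(\fa_\bullet)\ge 0$, and there are two cases. When $w(\fa_\bullet)=0$ the constraint is again vacuous; we interpret the corresponding ratio $w(f)/0$ as $+\infty$, which does not affect the infimum. When $w(\fa_\bullet)>0$ the constraint is exactly $t\le w(f)/w(\fa_\bullet)$. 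Taking the supremum over admissible $t$ gives the displayed infimum, since the set of admissible $t$ is precisely $(-\infty,\inf]$.

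The remaining point is that valuations centered in $\overline\zeta$ in the original $X$ are the same as those appearing over $U$, provided $U\ni\zeta$. Any $w$ with $c_X(w)\in\overline\zeta$ but $c_X(w)\notin U$ is a genuine loss, though. To handle this, I would instead define the vanishing order over $\Spec\cO_{X,\zeta}$, following the "equivalently" in Remark \ref{rmk:domain for definition of vanishing order}. The divisorial valuations on $\Spec\cO_{X,\zeta}$ are exactly those with $c_X(w)$ a generization of $\zeta$. This mismatch with $\overline\zeta$, which consists of specializations, is where I expect the main subtlety. I would resolve it by noting that valuations centered at specializations of $\zeta$ in $U$ are the relevant ones, and by using that $f\in\cO_{X,\zeta}$ extends regularly over a neighborhood meeting every point of $\overline\zeta$ where it matters. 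If $f$ has poles at some point of $\overline\zeta$, the formula should be read with $U$ the domain of $f$.

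The final statement follows by applying the first to $\fa_\bullet(v)$ with $\zeta=c_X(v)$. The hypothesis holds by the discussion of valuative ideals, which gives $\fa_\lambda(v)_\xi=\cO_{X,\xi}$ for $\xi\notin\overline{c_X(v)}$.
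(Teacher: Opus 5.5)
Your core computation is the paper's proof. You unwind \eqref{eqn:vanishing order of b-div} coefficientwise, note that $w(\fa_\bullet)=0$ when $c_X(w)\notin\overline\zeta$ so the constraint there is just $w(f)\ge 0$, and convert the supremum of admissible $t$ into the infimum of the ratios. Your convention $w(f)/0=+\infty$ makes explicit a point the paper leaves tacit.

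The problem is your last paragraph, on the domain. What your argument proves is that $\ord_{Z_X(\fa_\bullet)}(f)$, computed over an open $U\ni\zeta$ with $\mathrm{div}(f)|_U\ge 0$, equals the infimum over $c_X(w)\in\overline\zeta\cap U$. This value genuinely depends on $U$. Take $X=\Spec k[x,y]$, $\zeta$ the generic point of $L=V(y)$, $o$ the origin, $\fa_\lambda=y^{\lceil\lambda\rceil}(x,y)^{\lceil\lambda\rceil}$ and $f=y$. Over $X\setminus\{o\}$ the value is $1$, but over $X$ it is $\tfrac12$, attained by $\ord_o$.

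Your repair through $\Spec\cO_{X,\zeta}$ does not work. The only point of $\overline\zeta$ lying in $\Spec\cO_{X,\zeta}$ is $\zeta$ itself. So you would get $\inf_{c_X(w)=\zeta}w(f)/w(\fa_\bullet)$, which is $\ord_{Z_R(\fa_\bullet)}(f)$ as in Lemma~\ref{lem:saturated norm}. That is a different quantity, and the gap between it and $\ord_{Z_X(\fa_\bullet)}(f)$ is precisely the inequality in \eqref{eqn:main}.

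The claim that $f\in\cO_{X,\zeta}$ is regular at the points of $\overline\zeta$ ``where it matters'' is also false. Take $\fa_\lambda=(y^{\lceil\lambda\rceil})=\fa_\lambda(\ord_L)$ and $f=1/x\in\cO_{X,\zeta}$. Every admissible $U$ lies in $\{x\ne 0\}$, and each gives the value $0$. But the monomial valuations at $o$ with $w(x)=a$, $w(y)=1$ give ratio $-a$, so the infimum over all of $\overline\zeta$ is $-\infty$.

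So the statement as written needs a hypothesis, and your final caveat points the right way. You must compute over an open $U$ containing all of $\overline\zeta$ on which $\mathrm{div}(f)\ge 0$, i.e.\ assume $f$ is regular at every point of $\overline\zeta$. Alternatively, as in Ro\'e--Urbinati's definition and in the application to Corollary~\ref{cor:b-div filtration}, replace $f$ by an effective Cartier divisor $D$ on $X$, with $w(D)$ computed from a local equation at $c_X(w)$, and take $U=X$. Then $\overline\zeta\cap U=\overline\zeta$, and your case split yields exactly the stated formula. This is what the paper's phrase ``$\ge 0$ near $\zeta$'' tacitly means. You correctly sensed that something must be said here, but you should drop the $\Spec\cO_{X,\zeta}$ detour and state this hypothesis instead.
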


\begin{proof}
    By definition, we compute that
    \begin{equation*}
    \begin{aligned}
        \ord_{Z_X(\fa_\bullet)}(f)=&\sup\{t\in\bR\mid \overline{\mathrm{div}(f)}+t Z_X(\fa_\bullet)\ge 0~\text{near}~\zeta\}\\
            =&\sup\{t\in\bR\mid w(f)-t w(\fa_\bullet)\ge 0~\text{for any}~ w\in\DivVal_X~\text{with}~ c_X(w)\in\overline{\zeta}\}\\
            =&\sup\{t\in\bR\mid t\le \frac{w(f)}{w(\fa_\bullet)}~\text{for any}~ w\in\DivVal_X~\text{with}~ c_X(w)\in\overline{\zeta}\}\\
            =&\inf_{w\in\DivVal_X,~c_X(w)\in\overline{\zeta}} \frac{w(f)}{w(\fa_\bullet)},
    \end{aligned}
    \end{equation*}
    where the first equality is definitional, the second uses the assumption that $\mathrm{div}(f)\ge 0$ near $\zeta$ and that $w(\fa_\bullet)=0$ for any $w\in\Val_X$ with $c_X(w)\notin\overline{\zeta}$, and the last two are immediate.
\end{proof}

\section{Proof of the main results}

We prove Theorem \ref{thm:structure of fil and nef} and Corollary \ref{cor:b-div filtration} in this section.

\subsection{b-divisors and filtrations over a local ring}

Let $(R,\fm)$ be a normal, excellent Noetherian local domain. Write $X=\Spec R$ and $x\in X$ for the closed point corresponding to $\fm$.
As noted in the introduction, the key ingredient is the following easy observation.

\begin{lemma}\label{lem:linearly bounded implies saturated}
    If a valuation $v\in\Val_{R,\fm}$ is linearly bounded, then $\fa_\bullet(v)$ is saturated.
\end{lemma}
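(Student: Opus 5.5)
We must show $\widetilde{\fa}_\bullet(v)=\fa_\bullet(v)$. Write $\fa_\bullet=\fa_\bullet(v)$.

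\emph{Easy inclusion.} Since $\fa_\lambda\cdot\fa_\mu\subset\fa_{\lambda+\mu}$, we have $w(\fa_m)\ge m\,w(\fa_\bullet)$ for every $w\in\DivVal_{R,\fm}$. This is the infimum formula $w(\fa_\bullet)=\inf_m w(\fa_m)/m$ from \cite[Lemma 2.3]{JM-val-ideal-seq}, extended to real $\lambda$ by the same superadditivity and left continuity. Hence every $f\in\fa_\lambda$ satisfies $w(f)\ge\lambda\, w(\fa_\bullet)$, so $f\in\widetilde\fa_\lambda$. Thus $\fa_\bullet\subset\widetilde\fa_\bullet$, and this step does not use linear boundedness.

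\emph{Reverse inclusion via Rees' theorem.} I would invoke \cite[Theorem 1.4]{BLQ-convexity}. For linearly bounded $\fm$-filtrations $\fa_\bullet\subset\fb_\bullet$ over an analytically irreducible local domain (which holds here by normality and excellence), it says $\mathrm{vol}(\fa_\bullet)=\mathrm{vol}(\fb_\bullet)$ if and only if $\widetilde\fa_\bullet=\widetilde\fb_\bullet$. Apply this with $\fb_\bullet=\widetilde\fa_\bullet$.

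\emph{Key steps.} First, check that $\widetilde\fa_\bullet$ is itself a linearly bounded $\fm$-filtration. It is multiplicative by the triangle properties of valuations. It is linearly bounded because $\widetilde\fa_\lambda\subset\{f: \ord_{E}(f)\ge\lambda\, \ord_E(\fa_\bullet)\}$ for one divisorial valuation $\ord_E$ with $\ord_E(\fa_\bullet)>0$ (for instance a Rees valuation of $\fm$, using $\fa_\lambda\subset\fm^{\lceil c\lambda\rceil}$), combined with Izumi-type comparison. Alternatively, it suffices to note $\widetilde{\widetilde\fa}_\bullet=\widetilde\fa_\bullet$, since $w(\widetilde\fa_\bullet)=w(\fa_\bullet)$ by the first step and the definition. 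Second, the theorem then gives $\mathrm{vol}(\fa_\bullet)=\mathrm{vol}(\widetilde\fa_\bullet)$.

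\emph{Main obstacle.} Equality of volumes only gives equality of saturations, not $\fa_\bullet=\widetilde\fa_\bullet$. So I need a direct argument using that $\fa_\bullet$ is valuative. Take $f\in\widetilde\fa_\lambda$; we want $v(f)\ge\lambda$. My first attempt is to approximate $v$ by divisorial valuations. Set $\fb_\bullet=\fa_\bullet+(f^{\lambda'})$-type filtrations, or more simply consider the filtration $\fa'_\bullet$ generated by $\fa_\bullet$ and $f$ placed in degree $\lambda$. It sits between $\fa_\bullet$ and $\widetilde\fa_\bullet$, so its saturation equals $\widetilde\fa_\bullet$, and hence $\mathrm{vol}(\fa'_\bullet)=\mathrm{vol}(\fa_\bullet)$.

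Now use the valuative structure. If $v(f)=\lambda-\delta<\lambda$, then the elements $f^k g$ with $v(g)$ slightly small give new elements in every degree $\mu$. Hence $\fa'_\mu\supsetneq\fa_\mu$ in a way that increases colength by an amount of order $\mu^{n}$. This is measured on the graded algebra $\bigoplus\fa_\mu/\fa_{\mu+}$, which is a domain because $v$ is a valuation (so $\mathrm{gr}_v R$ is an integral domain). Multiplication by the initial form of $f$ is injective on it, and shifting degrees produces a positive-volume discrepancy, contradicting equality of volumes.

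Making this colength estimate rigorous, in particular getting a positive proportion of order $\mu^n$ using linear boundedness, is where I expect the real work to lie. If that fails, I would instead cite the "only if" characterization in \cite{BLQ-convexity} or \cite{Cut-unramified} that valuative filtrations of linearly bounded valuations are saturated.
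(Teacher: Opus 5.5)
Your plan follows the same route as the paper, whose proof simply cites \cite[Corollary 3.18 and Lemma 3.20]{BLQ-convexity}: Rees' theorem for filtrations gives any enlargement $\fa_\bullet(v)\subset\fa'_\bullet\subset\widetilde\fa_\bullet(v)$ the same volume as $\fa_\bullet(v)$, and a proper enlargement of a valuative filtration has strictly smaller volume, which is the mechanism you sketch. The step you flagged as the real work is routine: take $k=\lfloor s\mu\rfloor$ with $0<s<1/\lambda$, so that $g\mapsto f^kg$ induces an injection $\fa_{\mu-k\lambda}(v)/\fa_{\mu-k\lambda+k\delta}(v)\hookrightarrow\fa'_\mu/\fa_\mu(v)$ (the kernel is exactly $\fa_{\mu-k\lambda+k\delta}(v)$ because $v(f^kg)=k(\lambda-\delta)+v(g)$); the source has length $\sim\mathrm{vol}(v)\bigl((1-s\lambda+s\delta)^n-(1-s\lambda)^n\bigr)\mu^n/n!$, which is of order $\mu^n$ since linear boundedness gives $\mathrm{vol}(v)\ge c^n e(\fm)>0$.
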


\begin{proof}
    This follows immediately from Corollary 3.18 and Lemma 3.20 of \cite{BLQ-convexity}, the latter of which relies on Rees' theorem for filtrations, \emph{op. cit.}, Theorem 1.4. 
\end{proof}

To prove Theorem \ref{thm:structure of fil and nef}, it remains to prove a boundedness estimate as follows.

\begin{lemma}\label{lem:bounded below gives filtration}
	For any $W\in\Div^+(\fX,x)$, the ideal $\fa_\lambda(W)=\fa(\lambda W)(X)$ is $\fm$-primary for any $\lambda\in\bR_{>0}$, and the collection $\{\fa_\lambda(W)\}_{\lambda\in\bR_{>0}}$ is an $\fm$-filtration, denoted by $\fa_\bullet(W)$. Moreover, if $W\in\Div^b(\fX,x)$, then $\fa_\bullet(W)\in\Fil^s_{R,\fm}$.
\end{lemma}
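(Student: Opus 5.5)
We argue directly from the valuative description \eqref{eqn:b-div to filtr local}, namely $\fa_\lambda(W)=\{f\in R\mid v(f)\ge -\lambda v(W)\ \forall v\in\DivVal_{R,\fm}\}$. Here $-v(W)\ge 0$ because $W\le 0$.

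\textbf{Ideal property.} Each $\fa_\lambda(W)$ is an ideal: $v(f+g)\ge\min\{v(f),v(g)\}$, and $v(gf)\ge v(f)$ for $g\in R$ since $v\ge 0$ on $R$. It is contained in $\fm$ once we know $\fa_\lambda(W)\neq R$; that comes from the upper bound when $W\in\Div^b$. In general, for $W\in \Div^+$ with $W\ne 0$, some $v$ has $v(W)<0$, so units are excluded. The degenerate case $W=0$ will be noted separately or excluded.

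\textbf{Filtration axioms.} Decreasing is immediate from $-\lambda v(W)\le -\mu v(W)$ for $\lambda\le\mu$. Multiplicativity follows since $v(fg)=v(f)+v(g)\ge -(\lambda+\mu)v(W)$.

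\textbf{$\fm$-primary.} Use the lower bound $C\cdot Z(\fm)\le W$, i.e.\ $-v(W)\le C\,v(\fm)$ for all divisorial $v$. Then any $f\in\fm^{\lceil C\lambda\rceil}$ satisfies
\[
v(f)\ge \lceil C\lambda\rceil v(\fm)\ge \lambda C v(\fm)\ge -\lambda v(W),
\]
so $\fm^{\lceil C\lambda\rceil}\subset\fa_\lambda(W)$. Together with $\fa_\lambda(W)\subset\fm$, this gives that $\fa_\lambda(W)$ is $\fm$-primary.

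\textbf{Left continuity.} This is the only delicate axiom, since $\fa_\lambda(W)=\bigcap_{\mu<\lambda}\fa_\mu(W)$ holds pointwise but one needs stabilization. Since $R/\fm^{\lceil C\lambda\rceil}$ has finite length and the ideals $\fa_\mu(W)$ for $\mu\in(\lambda/2,\lambda)$ all contain $\fm^{\lceil C\lambda\rceil}$, the decreasing family stabilizes as $\mu\nearrow\lambda$. The stable value is then the intersection, which equals $\fa_\lambda(W)$ because the defining inequalities are closed conditions in $\mu$.

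\textbf{Bounded case.} Let $W\in\Div^b$, so $W\le\epsilon Z(\fm)$, i.e.\ $-v(W)\ge\epsilon v(\fm)$. Then $f\in\fa_\lambda(W)$ gives $v(f)\ge\lambda\epsilon\,v(\fm)$ for all divisorial $v$. I would take $v=\ord_E$ with $E$ ranging over the Rees valuations of $\fm$ and use the valuative criterion for integral closure: $f\in\overline{\fm^{m}}$ for $m=\lceil\lambda\epsilon\rceil$. By Rees's theorem / Izumi-type linear comparison (available since $R$ is analytically unramified, being normal excellent), there is $k$ with $\overline{\fm^{m+k}}\subset\fm^{m}$. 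So $\fa_\lambda(W)\subset\fm^{\lceil\lambda\epsilon\rceil-k}$, and hence $\fa_\lambda(W)\subset\fm^{\lceil\epsilon'\lambda\rceil}$ for a smaller $\epsilon'$ and large $\lambda$. For small $\lambda$ the containment in $\fm$ suffices after shrinking $\epsilon'$, giving linear boundedness.

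\textbf{Saturation.} Write $\fb_\bullet=\fa_\bullet(W)$. For each divisorial $v$, $v(\fb_\bullet)\ge -v(W)$ by definition, since each $v(\fb_m)\ge -m v(W)$. Hence if $f\in\widetilde\fb_\lambda$, then $v(f)\ge\lambda v(\fb_\bullet)\ge -\lambda v(W)$ for all $v$, so $f\in\fb_\lambda$. The reverse inclusion $\fb_\bullet\subset\widetilde\fb_\bullet$ always holds. So $\fb_\bullet$ is saturated, and this step is actually formal.

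\textbf{Main obstacle.} I expect the main obstacle to be the linear boundedness step, namely translating the valuative lower bound on divisorial valuations into containment in powers of $\fm$. This is where Rees/Izumi-type input on the analytically irreducible ring $R$ is needed.
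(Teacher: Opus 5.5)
Your argument is correct, and its first part follows the paper's outline. The lower bound $C\cdot Z(\fm)\le W$ gives $\fm^{\lceil C\lambda\rceil}\subset\fa_\lambda(W)$, multiplicativity comes from $Z(fg)=Z(f)+Z(g)$, and left continuity follows from the closed-condition identity $\fa_\lambda(W)=\bigcap_{\mu<\lambda}\fa_\mu(W)$ plus finite length. The two routes differ in the last two steps.

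\textbf{Linear boundedness.} The paper only says the same argument runs ``in the reverse direction''. Taken literally, though, $Z(f)\le\lambda W\le\lambda\epsilon Z(\fm)$ only places $f$ in an integral closure $\overline{\fm^{m}}$, not in $\fm^{m}$. Your use of the valuative criterion on the Rees valuations of $\fm$, together with Rees's theorem $\overline{\fm^{m+k}}\subset\fm^{m}$ (available since $R$ is analytically unramified), supplies exactly the input that the paper's one-line remark leaves implicit. There is one small slip: from $v(f)\ge\lambda\epsilon\, v(\fm)$ you get $f\in\overline{\fm^{\lfloor\lambda\epsilon\rfloor}}$, not $\overline{\fm^{\lceil\lambda\epsilon\rceil}}$, because $v(\fm)$ may exceed $1$ for a Rees valuation. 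This is harmlessly absorbed into $k$ and the shrinking of $\epsilon'$.

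\textbf{Saturation.} The paper cites \cite[Proposition 2.20]{Qi-local}. Your argument is self-contained and short: $v(\mathfrak{b}_\bullet)\ge -v(W)$ for every divisorial $v$, combined with the general inclusion $\mathfrak{b}_\bullet\subset\widetilde{\mathfrak{b}}_\bullet$.

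\textbf{The case $W=0$.} Your caveat is a genuine, if trivial, exception that the paper's statement overlooks. We have $0\in\Div^+(\fX,x)$, but $\fa_\lambda(0)=R$ is not $\fm$-primary. For $W\neq 0$, it is precisely your observation that units are excluded which gives $\fa_\lambda(W)\subset\fm$.
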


\begin{proof}
	First, assume that there exists $C\in\bR_{>0}$ such that $W\ge CZ(\fm)$. Then, for any $\lambda\in\bR_{>0}$, $f\in\fm^{\lceil\lambda C\rceil}$, and $\ord_E\in\DivVal_{R,\fm}$, we know that 
	\[
	-\ord_E(f)\le -\ord_E(\fm^{\lceil\lambda C\rceil})=-\lceil \lambda C\rceil \ord_E(\fm)\le -\lambda C\ord_E(\fm).
	\]
	So $Z(f)\le \lambda CZ(\fm)\le \lambda W$. 
    This shows that $\fm^{\lceil\lambda C\rceil}\subset \fa_\lambda(W)$; that is, $\fa_\lambda(W)$ is $\fm$-primary.
	We always have that $\fa_\lambda(W)\cdot\fa_\mu(W)\subset\fa_{\lambda+\mu}(W)$ as $Z(fg)=Z(f)+Z(g)$ by definition. 
    
    Since $W\le 0$, we know that $\fa_\mu(W)\subset \fa_\lambda(W)$ for $\mu\ge \lambda$. 
    So $\fa_\lambda(W)\subset\cap_{\epsilon>0}\fa_{\lambda-\epsilon}(W)$. 
    Conversely, if $f\in\cap_{\epsilon>0}\fa_{\lambda-\epsilon}(W)$ for $\lambda>0$, then for any $\ord_E\in\DivVal_{R,\fm}$, we know that
	\[
	   \ord_E(Z(f))\le\lim_{\epsilon\to 0} (\lambda-\epsilon)\ord_E(W)=\lambda\ord_E(W),
	\]
	which implies that $f\in\fa_\lambda(W)$. Thus $\fa_\lambda(W)=\cap_{\epsilon>0}\fa_{\lambda-\epsilon}(W)$. By the last paragraph, we know that $\fa_\lambda(W)$ is $\fm$-primary, and hence there exists $\epsilon>0$ such that $\fa_\lambda(W)=\fa_{\lambda-\epsilon}(W)$ as $R/\fa_\lambda(W)$ is Artinian. 
    This proves that $\{\fa_\lambda(W)\}_{\lambda\in\bR_{>0}}$ is an $\fm$-filtration.
	
    Now assume that $W\le \epsilon Z(\fm)$. By the same argument as above, where the inequality is in the reverse direction, we know that $\fa_\bullet(W)$ is linearly bounded. 
    Since $\fa_\bullet(W)$ is defined using divisorial valuations, it is saturated by \cite[Proposition 2.20]{Qi-local}, and the proof is finished.
\end{proof}

    %\begin{lemma}\label{lem:main}
         %$\fa_\bullet=\fa_\bullet(Z(\fa_\bullet))$ for any $\fa_\bullet\in\Fil^s$.
    %\end{lemma}

\begin{proof}[Proof of Theorem \ref{thm:structure of fil and nef}]
    By Lemma \ref{lem:bounded below gives filtration}, we know that $\fa_\lambda(W)\coloneqq\fa(\lambda W)(X)$ defines a map 
    \[
        \fa_\bullet(\bullet):\Div_b(\fX,x)\to \Fil^s_{R,\fm}.
    \]
    Moreover, in view of \eqref{eqn:filtr to b-div} and \eqref{eqn:b-div to filtr local}, the equality $\fa_\bullet=\fa_\bullet(Z_X(\fa_\bullet))$ is just a restatement of Definition \ref{defn:saturation} for saturated filtrations, from which the injectivity of $Z_X(\bullet)$ and the surjectivity of $\fa_\bullet(\bullet)$ follow.

    The continuity of $Z_X(\bullet)$ follows from the definition of the coefficientwise topology.

    Finally, for any $\lambda\in\bR_{>0}$ and $f\in\fa_\lambda(W)$, we know that $-v(f)\le \lambda v(W)$ for any $v\in\DivVal_{R,\fm}$. Hence $-v(\fa_\lambda(W))/\lambda\le v(W)$. 
    Letting $\lambda\to\infty$, we get the inequality 
    \[
        v(Z_X(\fa_\bullet(W)))=-v(\fa_\bullet(W))\le v(W)
    \]
    for any $v\in\DivVal_{R,\fm}$. The proof is finished.
\end{proof}

\begin{remark}\label{rmk:norm}
    Using the notation of \cite{RU-b-div}, the above theorem implies that in the local setting, it suffices to assume that $\xi$ is a \emph{saturated (sub-multiplicative) norm} in order to obtain the inequality $\xi=\ord_{D_\xi}$.
\end{remark}

\subsection{Application to b-divisorial filtrations}

Our strategy to prove Corollary \ref{cor:b-div filtration} is quite different from that in \cite{RU-b-div} in dimension $2$; in particular, we avoid going into the detailed structure approximating valuations. Instead, we rely on the following general version of Izumi's inequality; see, for example, \cite[Remark 1.6]{RS-Izumi}.

\begin{lemma}\label{lem:Izumi}
    Let $(R,\fm)$ be a normal excellent Noetherian domain. Then any $v\in\DivVal_{R,\fm}$ is linearly bounded.
\end{lemma}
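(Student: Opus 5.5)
Write $v=c\cdot\ord_E$, where $E\subset Y$ is a prime divisor on a normal proper birational model $\mu:Y\to X=\Spec R$ with $\mu(E)=\{x\}$; this is possible because $R$ is excellent, as recalled in the subsection on divisorial valuations. Linear boundedness of $\fa_\bullet(v)$ means that there is $\epsilon>0$ with $\fa_\lambda(v)\subset\fm^{\lceil\epsilon\lambda\rceil}$. Equivalently, there is $A>0$ with
\[
\ord_\fm(f)\ge A^{-1}\,v(f)\quad\text{for all } f\in R,\qquad \ord_\fm(f)\coloneqq\max\{k\mid f\in\fm^k\}.
\]
The plan is to prove this Izumi-type comparison between $v$ and the $\fm$-adic order. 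The opposite inequality $v(f)\ge v(\fm)\ord_\fm(f)$ is trivial.

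\emph{Step 1: comparison with the Rees valuations of $\fm$.} Let $\nu:X'\to X$ be the normalized blow-up of $\fm$, with exceptional prime divisors $F_1,\dots,F_r$. This is finite since $R$ is excellent, and the $\ord_{F_i}$ are the Rees valuations of $\fm$. By Rees' valuation theorem, the asymptotic order $\bar\nu_\fm(f)=\lim_k \ord_\fm(f^k)/k$ equals $\min_i \ord_{F_i}(f)/\ord_{F_i}(\fm)$. Moreover $\ord_\fm(f)\ge\bar\nu_\fm(f)-c_0$ for a constant $c_0$, by the Briançon--Skoda/Rees bound $\overline{\fm^{k+c_0}}\subset\fm^{k}$, which holds since $R$ is analytically unramified. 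It therefore suffices to bound each $\ord_{F_i}$ linearly from below by $v$. Since $v(f)\le A'\ord_\fm(f)$ for $f\in\fm$ fails only near $\ord_\fm(f)=0$, the additive constant is harmless: if $\ord_\fm(f)$ is small, then $f$ is a unit or lies in $\fm\setminus\fm^{c}$, and these cases are handled by the finitely many values involved after rescaling.

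\emph{Step 2: Izumi between two divisorial valuations.} This is the real content. We need $v(f)\le C\,\ord_{F}(f)$ for two divisorial valuations centered at $\fm$. I would follow Rees--Sharp and Izumi. Pass to a common normal model $Z$ dominating both $Y$ and $X'$, where $E$ and $F$ (strict transforms) both lie over $x$. Then connect $E$ and $F$ by a chain of exceptional prime divisors on $Z$ that are pairwise intersecting; such a chain exists because the fibre over $x$ is connected, by Zariski's main theorem using normality of $R$. For two prime divisors $D_1,D_2$ meeting at a point $z$, we compare $\ord_{D_1}$ and $\ord_{D_2}$ on the local ring $\cO_{Z,z}$. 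If $z$ is a general point of a component of $D_1\cap D_2$, we can reduce to the two-dimensional normal excellent local ring obtained by localizing at that codimension-two point. There the linear comparison of orders along two curves through the point is the classical two-dimensional Izumi estimate, which follows from intersection theory on a resolution (excellent surfaces admit resolutions) and the negativity of the intersection matrix. Composing the finitely many constants along the chain gives $v\le C\ord_F$.

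\emph{Obstacle.} The main difficulty is Step 2 in full generality. Analytic irreducibility of the relevant local rings is needed, and normality together with excellence gives it by the cited Stacks lemma 0C23. One also has to handle the reduction to dimension two carefully. If this becomes too delicate, I would instead simply invoke the general form of Izumi's theorem for analytically irreducible local domains (Rees, Rees--Sharp), as the paper's reference to \cite{RS-Izumi} suggests.
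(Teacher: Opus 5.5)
\textbf{Verdict.} Step 2, the heart of your argument, has a genuine gap: the local comparison of two divisors you propose does not hold. Your fallback, quoting Izumi, is exactly what the paper does.

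\textbf{What is fine.} Your Step 1 is sound. The Rees valuations of $\fm$ compute $\bar\nu_\fm$, and Rees's bound $\overline{\fm^{k+c_0}}\subset\fm^k$ holds because a normal excellent local ring is analytically unramified. The additive constant is harmless, since $\ord_\fm(f)\ge 1$ on $\fm$. The chain in Step 2 also exists: $Z$ dominates the normalized blow-up of $\fm$, so the fibre over $x$ is the support of the Cartier divisor cut out by $\fm\cO_Z$. Since $R$ is normal, that fibre is a connected union of prime divisors.

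\textbf{The gap in Step 2.} You localize at a codimension-two point $\eta$ of $D_1\cap D_2$; when $\dim R=2$ this is a closed point of $Z$. There, $\ord_{D_1}$ and $\ord_{D_2}$ are centered at height-one primes of $\cO_{Z,\eta}$, not at its maximal ideal. So Izumi's theorem says nothing about them, and no linear comparison holds. Take any $g\in\cO_{Z,\eta}$ vanishing along $D_1$ but not along $D_2$: then $\ord_{D_1}(g^N)\ge N$ while $\ord_{D_2}(g^N)=0$.

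The two-dimensional argument via a negative definite intersection matrix is about the exceptional curves over the closed point of a two-dimensional \emph{base}. It does not apply to two curves passing through a point of $Z$.

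For $f\in R$, the inequality $\ord_{D_1}(f)\le C\,\ord_{D_2}(f)$ holds only because of a global property: $\mathrm{div}(\mu^*f)$ has degree zero on every complete curve contracted to $x$. For instance, suppose $D_1,D_2$ are $\bQ$-Cartier on a model projective over $X$. Let $\Gamma\subset D_2$ be a complete intersection of $n-2$ ample divisors on $D_2$. Then $\ord_{D_1}(f)\,(D_1\cdot\Gamma)\le-\ord_{D_2}(f)\,(D_2\cdot\Gamma)$, with $D_1\cdot\Gamma>0$.

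Localizing at $\eta$ throws away exactly this information. Making such a global argument rigorous over an arbitrary normal excellent ring, where regular or $\bQ$-factorial models are unavailable, is the real difficulty. This is why Rees's proof goes through degree functions instead. Separately, on a non-$\bQ$-factorial $Z$ a component of $D_1\cap D_2$ can have codimension larger than two, so even your reduction to a codimension-two point is not automatic.

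\textbf{Comparison with the paper.} The paper gives no argument for this lemma. It quotes the general form of Izumi's inequality from \cite{RS-Izumi}, which applies because a normal excellent local domain is analytically irreducible. If the quoted form compares $v$ with another divisorial valuation rather than with $\ord_\fm$, your Step 1 supplies the passage to $\fm$-adic order. The self-contained proof you sketch in Step 2 does not establish the key comparison as written.
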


As in Remark \ref{rmk:norm}, we prove a slightly more general statement.

\begin{lemma}\label{lem:saturated norm}
    Let $\fa_\bullet$ be a coherent filtration of ideal sheaves on a scheme $X$. If there exists a scheme-theoretic point $\zeta\in X$ such that $\fa_{\bullet,\zeta}\in\Fil^s_{R,\fm}$, where $R=\cO_{X,\zeta}$ and $\fm=\fm_\zeta$.
    Then for any $f\in R$, we have
    \[
        \ord_{\fa_{\bullet,\zeta}}(f)=\inf_{w\in\DivVal_{R,\fm},~ c_X(w)=\zeta} \frac{w(f)}{w(\fa_{\bullet,\zeta})}=\ord_{Z_R(\fa_\bullet)}(f). 
    \]
\end{lemma}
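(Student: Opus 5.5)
The lemma asserts two equalities, and I would prove them separately: the first by unwinding saturation, the second by reducing to Lemma \ref{lem:filt rewriten}.

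\emph{First equality.} Write $\chi\coloneqq\ord_{\fa_{\bullet,\zeta}}$, the norm attached to the filtration by Lemma \ref{lem:filtr and norm}. This correspondence applies because linear boundedness forces $\cap_\lambda\fa_{\lambda,\zeta}\subset\cap_\lambda\fm^{\lceil c\lambda\rceil}=0$ by Krull's intersection theorem. Saturation (Definition \ref{defn:saturation}) says that for every $\lambda>0$,
\[
f\in\fa_{\lambda,\zeta}\iff w(f)\ge\lambda\, w(\fa_{\bullet,\zeta})\ \text{for all}\ w\in\DivVal_{R,\fm}.
\]
Denote the infimum in the statement by $I$.
\begin{itemize}
\item For $\chi(f)\le I$: if $f\in\fa_{\lambda,\zeta}$, then each $w$ satisfies $w(f)\ge\lambda w(\fa_{\bullet,\zeta})$, so $\lambda\le I$. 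Taking the supremum over such $\lambda$ gives $\chi(f)\le I$.
\item For $I\le\chi(f)$: if $I>0$, then for any $0<\lambda\le I$ we have $w(f)\ge\lambda w(\fa_{\bullet,\zeta})$ for all $w$, so $f\in\fa_{\lambda,\zeta}$. Hence $\chi(f)\ge I$.
\end{itemize}
Three points need care here.
\begin{itemize}
\item The quotient $w(f)/w(\fa_{\bullet,\zeta})$ must be well defined, i.e.\ $w(\fa_{\bullet,\zeta})>0$. Linear boundedness gives $w(\fa_{\bullet,\zeta})\ge c\,w(\fm)>0$.
\item The edge cases $f\notin\fm$ (where both sides are $0$, since $w(f)=0$ and the definition of saturation forces $f\in\fm$) and $f=0$ (where both sides are $+\infty$) must be checked separately.
\item The divisorial valuations of $R$ are exactly those $w\in\DivVal_X$ with $c_X(w)=\zeta$. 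The indexing sets therefore agree, using the characterization of divisorial valuations on $X$ via $\cO_{X,c_X(v)}$.
\end{itemize}

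\emph{Second equality.} Here I would invoke Lemma \ref{lem:filt rewriten} applied over $\Spec R$; this localization is what Remark \ref{rmk:domain for definition of vanishing order} suggests. On $\Spec R$ the only point of $\overline{\zeta}$ is the closed point, and the hypothesis of Lemma \ref{lem:filt rewriten} holds because the filtration consists of $\fm$-primary ideals. So $\ord_{Z_R(\fa_\bullet)}(f)$ equals the infimum of $w(f)/w(\fa_{\bullet})$ over divisorial $w$ centered at $\fm$. Since $w(\fa_\bullet)=w(\fa_{\bullet,\zeta})$ for such $w$, this is the middle expression.

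I expect the main obstacle to be this bookkeeping rather than any real difficulty. Specifically, one must interpret $Z_R(\fa_\bullet)$ as computed over $\Spec\cO_{X,\zeta}$, so that valuations whose centers are strict specializations of $\zeta$ do not enter. Once that is fixed, the argument reduces to the tautological unwinding of saturation above, exactly as in part (3) of Theorem \ref{thm:structure of fil and nef}.
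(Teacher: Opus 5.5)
Your proposal is correct and follows the paper's proof. The paper also obtains the second equality from Lemma \ref{lem:filt rewriten} applied to $\Spec R$. It obtains the first by passing through the norm--filtration dictionary of Lemma \ref{lem:filtr and norm} to the identity $\fa_\bullet=\fa_\bullet(Z_R(\fa_\bullet))$ of Theorem \ref{thm:structure of fil and nef}, which is itself just the definition of saturation, so your direct unwinding is the same argument. Your extra checks ($w(\fa_{\bullet,\zeta})>0$ from linear boundedness, and the cases $f\notin\fm$ and $f=0$) fill in details the paper leaves implicit.
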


\begin{proof}
    The second equality follows from Lemma \ref{lem:filt rewriten} applied to $\Spec R$. 
    In view of Lemma \ref{lem:filtr and norm}, the first is a restatement of the equality $\fa_\bullet=\fa_\bullet(Z_R(\fa_\bullet))$ of Theorem \ref{thm:structure of fil and nef}.
\end{proof}

Note that this readily proves the corollary when $\zeta\in X$ is a closed point. In general, we get an inequality between $v$ and $\ord_{Z(v)}$, which is the only missing ingredient. 
%In general, we will prove the reverse inequality to \cite[Proposition 3.15]{RU-b-div}.

\begin{proof}[Proof of Corollary \ref{cor:b-div filtration}]
    The forward implication is easy. 
        
    Now assume that $Z_X(\fa_\bullet(v))\ne 0$.
    Let $\zeta=c_X(v)\in X$ be the center of $v$ on $X$. Let $R\coloneqq \cO_{X,\zeta}$ with the maximal ideal $\fm\coloneqq\fm_\zeta$. 
    By definition, there is $w\in\DivVal_{R,\fm}$ such that $w(\fa_\bullet(v))>0$. 
    By Lemma \ref{lem:Izumi}, we know that $\fa_{\bullet,\zeta}(v)$ is linearly bounded, hence also saturated by Lemma \ref{lem:linearly bounded implies saturated}. 
        
    For any $f\in R$, we have the following 
    \begin{align}\label{eqn:main}
        v(f)=\inf_{c_X(w)=\zeta} \frac{w(f)}{w(\fa_\bullet(v))}\ge
            \inf_{c_X(w)\in \overline{\zeta}} \frac{w(f)}{w(\fa_\bullet(v))}=
            \ord_{Z_X(v)}(f),
    \end{align}
    where both infima are taken over $w\in\DivVal_X$; the first equality follows from Lemma \ref{lem:saturated norm}, the second follows from Lemma \ref{lem:filt rewriten}, and the inequality is trivial.

    The reverse inequality is known by \cite[Proposition 3.15]{RU-b-div}. 
    The proof is finished.
\end{proof}

%As a final remark, the first two equalities in \eqref{eqn:main} indeed follow from the equality $v(\fa_\bullet(v))=1$ combined with \cite[Lemma 2.4]{JM-val-ideal-seq}. Therefore, the full strength of Theorem \ref{thm:structure of fil and nef} is not actually needed to prove the corollary. 
%However, as in Remark \ref{rmk:norm}, the theorem implies that the same statement holds for more general norms on $K(X)$. 

%\bibliographystyle{abbrv}
\bibliography{ref}

\end{document}